\newtheorem{theorem}{Theorem}[subsection]
\newtheorem*{theorem*}{Theorem}
\newtheorem{lemma}[theorem]{Lemma}
\newtheorem{prop}[theorem]{Proposition}
\newtheorem{mr}{Main result}
\newtheorem{prop*}{Proposition}
\newtheorem{cor}[theorem]{Corollary}
\newtheorem*{cor*}{Corollary}
\theoremstyle{definition}
\newtheorem{definition}[theorem]{Definition}
\newtheorem{ex}[theorem]{Example}
\newtheorem{rem}[theorem]{Remark}
\newcommand{\function}[5]{\begin{array}{crcl}
#1: & #2 & \longrightarrow & #3 \\

    & #4 & \longmapsto & #5 \end{array}}
\newcommand{\Sym}{\mathrm{Sym}}
\DeclareMathOperator{\Id}{Id}
\DeclareMathOperator{\D}{D}
\begin{document}

\begin{frontmatter}

    \title{Reconstruction of hypersurfaces from their invariants}

    \author{Thomas Bouchet}

    \ead{thomas.bouchet@univ-cotedazur.fr}

    \affiliation{organization={Université Côte d'Azur},
                addressline={Campus Sciences, Parc Valrose, 28 Avenue Valrose},
                postcode={06108},
                city={Nice},
                country={France}}

    \begin{abstract}
        Let $K$ be a field of characteristic $0$. We present an explicit algorithm that, given the invariants of a generic homogeneous polynomial $f$ under the linear action of $\mathrm{GL}_n$ or $\mathrm{SL}_n$, returns a polynomial differing from $f$ 
        only by a linear change of variables with coefficients in a finite extension of $K$.
        Our approach uses the theory of covariants and the Veronese embeddings to characterize 
        the linear equivalence class of a homogeneous polynomial through equations whose coefficients are invariants.
        As applications, we derive explicit formulas for reconstructing of a generic non-hyperelliptic curve of genus 4 from its invariants, as well as reconstructing generic non-hyperelliptic curves of genus 3 from their Dixmier-Ohno invariants.
        In both cases, the coefficients of the reconstructed curve lie in its field of moduli. 
    \end{abstract}
    
    \setcounter{tocdepth}{3}
        
\end{frontmatter}
    

\section{Introduction}

Invariant theory is the study of algebraic expressions that remain unchanged under various transformations,
providing powerful tools to analyze and reveal intrinsic features of mathematical structures.

In the 19th century, invariants theorists sought to classify the orbits of homogeneous polynomials in $n$ variables under the action of $\mathrm{SL}_n(\mathbb{C})$.
This naturally led to the study of invariants, which are polynomials in the coefficients of these forms that remain unchanged under the group action.
Using Gordan's algorithm~\citep{gordan}, they constructed generating systems for the algebras of invariants of binary forms ($n = 2$) of degrees $d\leq 6$ and $d = 8$~\citep{sylvester}.
Additionally, they found generating sets of invariants for cubic ternary and quaternary forms~\citep{clebsch}.
Unfortunately, due to the rapidly growing complexity of the task, only partial results were obtained at that time for larger degrees.

The field experienced a resurgence with Mumford's Geometric Invariant Theory (GIT)~\citep{mumford}, which constructs geometric quotients through algebras of invariants.
For instance, knowing a generating set of invariants (and their relations) for the algebra of $\mathrm{Sym}^d(K^n)$ under the action of $\mathrm{SL}_{n}$ enables the explicit construction of 
the geometric quotient of the subvariety of stable elements of $\mathrm{Sym}^d(K^n)$ under $\mathrm{SL}_{n}$, and provides parameters for the corresponding coarse moduli space.
Broadly speaking, a stable element can be represented by a list of invariants. We explain further in the introduction the meaning of the word stable.

In recent years, attention has turned to the reverse problem: 
given the invariants of a stable element of $\mathrm{Sym}^d(K^n)$ under the action of $\mathrm{SL}_{n}$, 
is there an explicit and efficient method to reconstruct a representative of that orbit? 
Although this problem is theoretically solvable using Gröbner bases, such methods are often computationally impractical in practice,
given the large number of invariants and their degrees.

Only specific cases have been addressed in the litterature, and the reconstruction problem does not seem to have been approached in full generality.~\cite{mestre} presented an algorithm for reconstructing binary forms of degree $6$,
while~\cite{noordsij} later introduced an algorithm for the
reconstruction of binary forms of degree $5$ in his Master's thesis. Their methods extend to generic binary forms of even and odd degrees respectively. 
The case of hyperelliptic curves of genus $3$, which are closely related to binary forms of degree $8$,
is treated by~\cite{LR}, even in the presence of extra automorphisms.
\cite{LRS} tackled the reconstruction of plane quartics by reducing the problem to reconstructing a space of binary forms.
Notably, all these methods rely on formulas derived from the work of~\cite{clebsch}.

In this paper, we present a solution to the reconstruction problem which is valid in a very general setting.
This method holds significant potential for applications,
among which the construction of curves or hypersurfaces with interesting arithmetic or geometric properties (e.g. CM curves~\citep{bouyer, kilicer}), arithmetic statistics~\citep{LRR},
and mechanical physics~\citep{olive_rec}. Moreover, this construction could shed light on rationality questions for some moduli spaces of hypersurfaces or curves.
\medskip

Let $n$ and $d$ be positive integers, and let $K$ be an algebraically closed field of characteristic $0$ or $p > d$. Take $W$ as a $(n+1)$-dimensional $K$ vector space
with basis $w_0,\ldots,w_n$ and dual basis $x_0,\ldots, x_n$, and define $\Sym^d(W^*)$
as the space of $(n+1)$-ary $d$-forms with coefficients in
$K$, which is of dimension $\binom{n+d}{d}$.

Let $G = \mathrm{SL}_{n+1}$ or $\mathrm{GL}_{n+1}$.
The group $G$ acts naturally on $W$ by left multiplication \[(g,v)\mapsto gv\,,\]
which induces a contragredient $G$-action on $W^*$
\[(g,x)\mapsto {}^tg^{-1}x,\] where $x$ is written in coordinates in the basis $x_0,\ldots,x_n$.

These actions extend to $\Sym^d(W)$ and $\Sym^d(W^*)$, and we write \[M\cdot f\] for the action of $M\in G$ on $f\in\Sym^d(W)$ or $\Sym^d(W^*)$.
Moreover, we say that $f,f'\in\Sym^d(W^*)$ are $G$-\textit{equivalent} if one can transform $f$ into $f'$ with the action of an element of $G$.
We call \textit{stabilizer} $G_f$ of a form $f\in \Sym^d(W^*)$ the subgroup of matrices $M\in G$ that satisfy 
\[M\cdot f = f.\]

For $G = \mathrm{GL}_{n+1}$, we always have \[U_d = \{\mu I_{n+1}~\lvert~\mu\in K,~\mu^d = 1\} \subseteq G_f.\]

For $G = \mathrm{SL}_{n+1}$, we have a similar inclusion \[U_d = \{\mu I_{n+1}~\lvert~\mu\in K,~\mu^d = 1,~\mu^{n+1} = 1\} \subseteq G_f.\]

We say that $G_f/U_d$ is the \textit{reduced stabilizer} of $f$. The reduced stabilizer does not depend on the choice of $G \in\{\mathrm{SL}_{n+1},\mathrm{GL}_{n+1}\}$.
\medskip

We let $K[\Sym^d(W^*)]^{\mathrm{SL}_{n+1}}$ be the algebra of invariant functions on $\Sym^d(W^*)$
for the action of $\mathrm{SL}_{n+1}$. Since $\mathrm{SL}_{n+1}$ is reductive, a celebrated theorem of~\cite{nagata} states that this algebra of invariants is finitely generated.
Let us assume that a finite set of generators $\{I_j\}_{j\in J}$ of $K[\Sym^d(W^*)]^{\mathrm{SL}_{n+1}}$ is known.
\medskip

We define the \textit{nullcone} \[\mathcal{N}_{\Sym^d(W^*)}^{\mathrm{SL}_{n+1}} = \{f\in \Sym^d(W^*)~|~\forall I\in K[\Sym^d(W^*)]_{>0}^{\mathrm{SL}_{n+1}},~I(f) = 0\}\,.\]
Morally speaking, the nullcone is composed of the degenerate orbits of the variety.

Let us define \[\Sym^d(W^*)^{ss} = \Sym^d(W^*) \backslash \mathcal{N}_{\Sym^d(W^*)}^{\mathrm{SL}_{n+1}}.\] 
The elements of ${\Sym^d(W^*)}^{ss}$ are called \textit{semistable}.
Finally, we let ${\Sym^d(W^*)}^s$ denote the subvariety of ${\Sym^d(W^*)}^{ss}$ consisting of the semistable elements with a closed orbit and finite stabilizer. 
Such elements are called \textit{stable}. For instance, homogeneous forms for $n\geq 2$ and $d \geq 3$ which define smooth hypersurfaces are stable~\cite[Chapter 8]{dolgachev}.
\medskip

Our problem is as follows: let $f\in \Sym^d(W^*)$ be stable, and suppose we are given only its orbit under the action of $\mathrm{GL}_{n+1}$ or $\mathrm{SL}_{n+1}$ as a list of invariants $(I_j(f))_{j\in J}$.
Can we explicitly find an element $f'\in\Sym^d(W^*)$ with the same invariants as $f$?

In this paper, we present an efficient algorithm which solves this problem generically.
In some favorable cases, such as binary forms of odd degrees, or ternary forms of degrees not multiple of $3$, 
the coefficients of the form $f'$ are invariants~(see Section~\ref{sec:examples}). 
\medskip

Unlike previous approaches that rely on Clebsch formulas~\citep{clebsch},
our method is built upon the theory of covariants, linear algebra, and classical tools of algebraic geometry.
We are able to characterize a general form $f$ by equations whose coefficients are invariants, or quotients of invariants,
in a larger space obtained through a Veronese morphism.
These results are established in Theorem~\ref{thm:general} and its Corollary~\ref{cor:general_reconstruction},
with the key ingredient being a Taylor-like identity (Corollary~\ref{cor:decompo_f}).

\begin{mr}[see Corollary~\ref{cor:general}]
    Let $k$ and $n$ be positive integers, and let $K$ be an algebraically closed field of characteristic $0$ or $p>d$. Let $W$ be a $K$-vector space of dimension $n+1$. 
    There exists an efficient algorithm, which, given a list of generating invariants $(I_j(f))_{j\in J}$ corresponding to the orbit of some unknown $f\in\Sym^{k}(W^*)$ which satisfies mild assumptions, returns a form $f'\in\Sym^{k}(W^*)$ with those invariants.
\end{mr}


In general, we require $K$ to be algebraically closed because the reconstructed form $f'$ may a priori lie in a
finite extension of the base field of the invariants. 
However, in practice, we aim to keep these extensions as small as possible (see Remark~\ref{rem:control_extension}).
The case where the covariants have degree $1$ is of great interest and is discussed in Remark~\ref{rem:order1}.
In this case, the reconstruction yields a polynomial whose coefficients lie in the same field as the invariants.
\medskip

It remains to determine when the assumptions of Theorem~\ref{thm:general} are satisfied.

\begin{mr}[see Proposition~\ref{prop:covs}]
    Let $n$ and $k$ be positive integers.
    We let $K$ be an algebraically closed field of characteristic $0$, and $W$ be a $K$-vector space of dimension $n+1$.
    Our algorithm can reconstruct any list of invariants $(I_j(f))_{j\in J}$ corresponding to a stable $f\in\Sym^{k}(W^*)$ with trivial reduced stabilizer.  
\end{mr}

In particular, we get a strong result for the reconstruction of hypersurfaces.

\begin{mr}[see Corollary~\ref{cor:smooth_hyp}]
    Let $n\geq 2$ and $k\geq 3$ be integers. We let $K$ be an algebraically closed field of characteristic $0$, and $W$ a $K$-vector space of dimension $n+1$.
    Let $f\in\Sym^{k}(W^*)$. Our algorithm can reconstruct any list of invariants $(I_j(f))_{j\in J}$ corresponding to a form $f$ which defines a smooth hypersurface without automorphisms. 
\end{mr}

Note that the condition on the stabilizer is not a necessary one, as illustrated by~\cite{cardona} in the case of binary forms of degree 6 with stabilizer $C_2$,
which can be reconstructed using Mestre's algorithm with a different set of covariants than the generic case.
\medskip

Finally, we revisit the reconstruction of binary forms and plane quartics in Section~\ref{sec:examples},
and extend our algorithm to reconstruct non-hyperelliptic curves of genus 4. In these cases, the formulas are remarkably simple, and the reconstructed equations have coefficients in the field where the invariants lie,
or an extension of degree at most $2$ for binary forms of even degrees.

Eventhough this algorithm works in all generality, there are very few instances where it can be effectively applied.
Indeed, generators of the rings of invariants $K[\Sym^d(W^*)]^{\mathrm{SL}_{n+1}}$ are not known in general, except for small values of $d$ and $n$. 
\medskip

We provide a \textsc{magma}~\citep{magma} package for the reconstruction of generic non-hyperelliptic curves
of genus $3$ and $4$~\citep{git-reconstruction}.
To a generic tuple of Dixmier-Ohno invariants, the function \texttt{ReconstructionGenus3} returns a plane quartic with these invariants (up to some weighted projective equivalence).
Similarly, to a generic tuple of invariants of non-hyperelliptic genus $4$ curves~\citep{bouchet}, the function \texttt{ReconstructionGenus4} returns a quadratic form $Q$ and a cubic form $E$ such that
the non-hyperelliptic curve of genus $4$ canonically embedded in $\mathbb{P}^3$ defined by $Q$ and $E$ has said invariants. Both cases $\mathrm{rk}(Q) = 3,4$ are covered,
as well as the reconstruction of hyperelliptic curves of genus 4 from the $106$ invariants exhibited by~\cite{popoviciu}.

\section{Reconstruction}
\label{sec:theory}

In this section, we introduce the building blocks of the paper and expose a key identity (Corollary~\ref{cor:decompo_f}).
We then move on to invariant theory, and apply the previously established results to prove the main Theorem~\ref{thm:general}.
\medskip

Let $n$, $d > 0$ be integers, and let $K$ be an algebraically closed field.
Let $W$ be a $K$-vector space with basis $w_0,\ldots,w_n$, and let $x_0, \ldots, x_n\in W^*$ denote its dual basis.

\subsection{Preliminaries}
\mbox{}

In this section, we introduce a bilinear operator $\D$ which is equivariant in some sense.
This operator can produce new covariants/contravariants from old ones (see Lemma~\ref{lem:D_cov}).

\begin{definition}\label{def:D}
    We extend the natural pairing $W\times W^* \rightarrow K$ to the $K$-bilinear map
    \[\function{\D}{\Sym(W)\times \Sym(W^*)}{\Sym(W^*)}{(w_0^{\alpha_0}\cdots w_n^{\alpha_n}, P)}{\frac{\displaystyle\partial^{\alpha}P}{\displaystyle\partial x_0^{\alpha_0}\cdots \displaystyle\partial x_n^{\alpha_n}}}\,,\]
    where $\alpha = \sum_i\alpha_i$.
\end{definition}

We also define $\D : \Sym(W^*)\times \Sym(W) \rightarrow \Sym(W)$ in a symmetric way.
The order of the arguments resolves any ambiguity. The bilinear map $\D$ is
classically called the \textit{apolarity bilinear form}~\cite{ehrenborg,dolgachev_classical}, and gives an isomorphism $\Sym^d(W^*)\simeq \Sym^d(W)^*$ over a field of characteristic $0$ or $p > d$.

\begin{rem}
    This map can be used to tackle the Waring problem for forms~\cite{ehrenborg}.
    The Waring problem consists, given a form $f\in\Sym^d(W^*)$, in finding the minimal number of linear forms such that $f$
    can be written the sum of the $d$-th powers of these linear forms. For example, a generic ternary quintic can be written as the sum
    of $7$ fifth powers~\cite[Corollary 4.3]{ehrenborg}.        
\end{rem}

\begin{definition}
    Let $d > 0$. We assume that $\mathrm{char}(K)$ is either $0$ or $p > d$. Let $q_0, \ldots, q_r$ be a basis of the $K$-vector space $\Sym^d(W^*)$.
    We say that $q_0^*, \ldots, q_r^*\in \Sym^d(W)$ is a dual basis for $q_0,\ldots ,q_r$, if for any $0\leq i,j\leq r$ we have
    \[\D(q_i^*, q_j)= \delta_{i,j}\,,\]
    where $\delta_{i,j}$ is the Kronecker symbol.
\end{definition}

\begin{lemma}\label{lem:mat_inv}
    We assume that $\mathrm{char}(K)$ is either $0$ or $p > d$. Let $p_0,\ldots, p_r\in \Sym^d(W)$ and $q_0, \ldots, q_r\in\Sym^d(W^*)$.
    Then the matrix \[M_{p,q} := \big(\D(p_i, q_j)\big)_{i,j}\] is invertible if and only if $(p_i)_i$ and $(q_j)_j$ are bases of their respective spaces.
\end{lemma}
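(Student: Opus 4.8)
The plan is to prove the equivalence in Lemma~\ref{lem:mat_inv} by exploiting the fact that the apolarity pairing $\D$ is a nondegenerate bilinear pairing between $\Sym^d(W)$ and $\Sym^d(W^*)$, as stated right after Definition~\ref{def:D}: over a field of characteristic $0$ or $p>d$ it induces an isomorphism $\Sym^d(W^*)\simeq \Sym^d(W)^*$. The strategy is therefore to reduce the statement to a standard fact about Gram-type matrices for a perfect pairing between two vector spaces of the same dimension $r+1$. The two directions are handled separately, and the nondegeneracy of $\D$ is the crucial input throughout.

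\begin{proof}
Since $\mathrm{char}(K)$ is $0$ or $p>d$, the pairing $\D$ restricted to $\Sym^d(W)\times\Sym^d(W^*)$ is a perfect (nondegenerate) pairing identifying $\Sym^d(W^*)$ with the dual of $\Sym^d(W)$. In particular both spaces have the same dimension $r+1$.

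For the ``if'' direction, suppose $(p_i)_i$ and $(q_j)_j$ are bases. Fix any dual basis $(q_j^*)_j\subseteq\Sym^d(W)$ for $(q_j)_j$, so that $\D(q_i^*,q_j)=\delta_{i,j}$; such a basis exists precisely because $\D$ is perfect. Write each $p_i$ in this basis as $p_i=\sum_k a_{i,k}\,q_k^*$, where $A=(a_{i,k})$ is the (invertible, since $(p_i)_i$ is a basis) change-of-basis matrix. By bilinearity of $\D$ we compute
\[\D(p_i,q_j)=\sum_k a_{i,k}\,\D(q_k^*,q_j)=\sum_k a_{i,k}\,\delta_{k,j}=a_{i,j}\,,\]
so $M_{p,q}=A$ is invertible.

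For the ``only if'' direction I argue by contraposition. Suppose, say, that $(p_i)_i$ fails to be a basis; then there is a nontrivial linear relation $\sum_i c_i\,p_i=0$ with $(c_i)_i\neq 0$. Applying $\D(\,\cdot\,,q_j)$ and using bilinearity gives $\sum_i c_i\,\D(p_i,q_j)=0$ for every $j$, which is exactly the statement that the nonzero vector $(c_i)_i$ lies in the left kernel of $M_{p,q}$; hence $M_{p,q}$ is singular. The case where $(q_j)_j$ fails to be a basis is symmetric, using a nontrivial relation $\sum_j c_j\,q_j=0$ and the columns of $M_{p,q}$. Thus invertibility of $M_{p,q}$ forces both families to be bases, which completes the proof.
\end{proof}

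The one step that genuinely requires the hypothesis on the characteristic is the existence of a dual basis in the ``if'' direction: this rests on the nondegeneracy of $\D$, which is the only place where $\mathrm{char}(K)\in\{0\}\cup\{p>d\}$ is used. Everything else is formal bilinear algebra, so I expect no real obstacle beyond invoking this perfect-pairing property correctly.
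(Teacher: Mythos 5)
Your proof is correct: the paper states Lemma~\ref{lem:mat_inv} without proof, and your argument via the perfectness of the apolarity pairing (dual basis for the ``if'' direction, a left/right kernel vector for the contrapositive) is exactly the standard reasoning the paper implicitly relies on. The only point worth making explicit is that $r+1$ is understood to equal $\dim_K\Sym^d(W^*)$, which you use when passing from ``not a basis'' to ``linearly dependent''.
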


\begin{lemma}\label{prop:dual_basis_matrix}
    We assume that $\mathrm{char}(K)$ is either $0$ or $p > d$.
    Let $q_0,\ldots, q_r$ be a basis of $\Sym^d(W^*)$, and let $q_0^*,\ldots,q_r^*$ be its dual basis.
    Let $b_i$ denote the $i$-th element of the canonical monomial basis, in lexicographical order, for $0\leq i\leq r$.
    Let $S$ be the change of basis matrix from $(b_i)_i$ to $(q_i)_i$.
    Then ${}^tS^{-1}$ is the change of basis matrix from $(b_i^*)_i$ to $(q_i^*)_i$.
\end{lemma}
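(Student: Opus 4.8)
The plan is to work entirely in coordinates and let the bilinearity of $\D$, together with the two defining dual-basis relations, do all the work; no geometry is needed beyond the fact that $\D$ is a perfect pairing. First I would fix the column convention for change-of-basis matrices, so that ``$S$ is the change of basis matrix from $(b_i)_i$ to $(q_i)_i$'' means $q_j = \sum_i S_{ij}\,b_i$ for each $j$. Since $\D$ induces the isomorphism $\Sym^d(W^*)\simeq\Sym^d(W)^*$ (equivalently, by Lemma~\ref{lem:mat_inv}), every basis of $\Sym^d(W^*)$ admits a unique dual basis, so the matrix $T$ defined by $q_i^* = \sum_k T_{ki}\,b_k^*$ exists and is exactly the object to be identified; the claim is that $T = {}^tS^{-1}$.

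The key step is to expand the relation $\D(q_i^*, q_j) = \delta_{i,j}$. Substituting both expansions and using bilinearity gives
\[ \delta_{i,j} = \D\Bigl(\sum_k T_{ki}\,b_k^*,\ \sum_l S_{lj}\,b_l\Bigr) = \sum_{k,l} T_{ki}\,S_{lj}\,\D(b_k^*, b_l). \]
Because $(b_k^*)_k$ is the dual basis of $(b_k)_k$ we have $\D(b_k^*, b_l) = \delta_{k,l}$, so the double sum collapses to $\sum_k T_{ki}\,S_{kj} = \delta_{i,j}$. Reading this as a matrix product, the left-hand side is $({}^tT\,S)_{ij}$, whence ${}^tT\,S = I_{r+1}$ and therefore $T = {}^t\!\bigl(S^{-1}\bigr) = {}^tS^{-1}$, as claimed.

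The only real obstacle is bookkeeping: one must commit to a single convention for the direction of the change-of-basis matrices and for the index placement in $\D(b_k^*,b_l)=\delta_{k,l}$ versus $\D(q_i^*,q_j)=\delta_{i,j}$, since a swapped convention would replace ${}^tS^{-1}$ by $S^{-1}$ or by $({}^tS^{-1})^{-1}$. A clean, convention-free way to package the argument is to observe that the restriction of $\D$ to $\Sym^d(W)\times\Sym^d(W^*)$ has the identity as its Gram matrix in the dual pair $(b_i^*),(b_i)$; hence in the pair $(q_i^*),(q_i)$ its Gram matrix is ${}^tT\,\cdot\,I\,\cdot\,S = {}^tT\,S$, and imposing that this equal $I$ (which is precisely the dual-basis condition for the $q$'s) yields $T = {}^tS^{-1}$ at once. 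I would present the coordinate computation above as the proof and keep the Gram-matrix reformulation as a conceptual sanity check.
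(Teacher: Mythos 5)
Your proof is correct, and the computation $\sum_k T_{ki}S_{kj}=\delta_{i,j}\Rightarrow{}^tT\,S=I$ is exactly the standard argument; the paper in fact states this lemma without proof, so there is nothing to diverge from. Your explicit attention to the column convention and to the fact that $\D(b_k^*,b_l)=\delta_{k,l}$ (which uses the factorial normalization of the $b_k^*$ and hence the hypothesis on $\mathrm{char}(K)$) is consistent with how the paper later uses the result in Lemma~\ref{lem:dual_basis}.
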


\subsection{Main identity}
\mbox{}

Let $k$, $d$, and $n > 0$ be integers. We assume that $\mathrm{char}(K) > kd$ or $\mathrm{char}(K) = 0$.
Now, let $W$ be a $K$-vector space with basis $w_0,\ldots,w_n$, and dual basis $x_0,\ldots, x_n$.
In this paragraph, we show an identity which allows to recover $f\in\Sym^{kd}(W^*)$ from its $D$-pairings with the $k$-products of a basis of $\Sym^d(W)$, identified as elements of $\Sym^{kd}(W)$.

\begin{definition}
    Let $d\geq 1$, and $\alpha_0,\ldots,\alpha_n\geq 0$ be integers of sum $d$. We define the multinomial coefficient associated to $(\alpha_i)_i$ as
    \[\binom{d}{\alpha_0,\ldots,\alpha_n} = \frac{d!}{\alpha_0!\cdots\alpha_n!}\,.\]
\end{definition}

\begin{lemma}
    For any integers $\alpha_0, \ldots, \alpha_n\geq 0$ of sum $kd$, we define \[J_\alpha = \left\{(\beta_{i,j})_{\substack{1\leq i \leq k\\0\leq j \leq n}} \in \mathbb{Z}_{\geq 0}^{k\times (n+1)}~\Bigg\vert~\sum_{l = 0}^{n}\beta_{i, l} = d\,,\ \sum_{l = 1}^{k}\beta_{l, j} = \alpha_j
    \right\},\]
    where $\alpha = (\alpha_i)_{0\leq i\leq n}$.
    We have the following equality:
    \begin{equation}\label{eq:combinatorics}
        \sum_{(\beta_{i, j})\in J_\alpha}\binom{d}{\beta_{1, 0},\ldots,\beta_{1, n}}\cdots\binom{d}{\beta_{k, 0},\ldots,\beta_{k, n}} = \binom{kd}{\alpha_0, \ldots, \alpha_n}.
    \end{equation}
\end{lemma}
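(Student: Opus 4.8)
The plan is to prove the identity by computing the coefficient of a single monomial in the polynomial $(x_0 + \cdots + x_n)^{kd} \in K[x_0,\ldots,x_n]$ in two different ways. First I would apply the multinomial theorem directly to obtain
\[
(x_0 + \cdots + x_n)^{kd} = \sum_{\substack{\alpha_0,\ldots,\alpha_n \geq 0 \\ \alpha_0 + \cdots + \alpha_n = kd}} \binom{kd}{\alpha_0,\ldots,\alpha_n}\, x_0^{\alpha_0}\cdots x_n^{\alpha_n},
\]
so that the right-hand side of~\eqref{eq:combinatorics} is exactly the coefficient of the monomial $x_0^{\alpha_0}\cdots x_n^{\alpha_n}$.

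Next, I would factor $(x_0 + \cdots + x_n)^{kd} = \prod_{i=1}^k (x_0 + \cdots + x_n)^d$ and expand each of the $k$ factors separately by the multinomial theorem, recording the exponents appearing in the $i$-th factor as $(\beta_{i,0},\ldots,\beta_{i,n})$ with $\sum_{l=0}^n \beta_{i,l} = d$. Multiplying the $k$ expansions together, the monomial $x_0^{\alpha_0}\cdots x_n^{\alpha_n}$ is produced precisely by those families $(\beta_{i,j})$ whose column sums reproduce $\alpha$, that is, by the elements of $J_\alpha$. Its coefficient is therefore $\sum_{(\beta_{i,j}) \in J_\alpha} \prod_{i=1}^k \binom{d}{\beta_{i,0},\ldots,\beta_{i,n}}$, the left-hand side of~\eqref{eq:combinatorics}. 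Equating the two computations of this common coefficient yields the claim.

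The only point requiring care is to verify that the index set $J_\alpha$ captures exactly the contributions to the chosen monomial: the row constraints $\sum_{l} \beta_{i,l} = d$ arise from expanding each factor individually, while the column constraints $\sum_{i} \beta_{i,j} = \alpha_j$ encode that the total exponent of $x_j$ in the product equals $\alpha_j$. A purely combinatorial variant is also available, by counting the words of length $kd$ over the alphabet $\{0,\ldots,n\}$ that use the letter $j$ exactly $\alpha_j$ times, in two ways: directly, which gives $\binom{kd}{\alpha_0,\ldots,\alpha_n}$; and by first splitting the $kd$ positions into $k$ consecutive blocks of length $d$ and recording, for each block $i$ and letter $j$, the number $\beta_{i,j}$ of occurrences, which gives the left-hand side after summing over all admissible $(\beta_{i,j}) \in J_\alpha$. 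I expect no substantial obstacle beyond this bookkeeping, so the result should follow immediately from either viewpoint.
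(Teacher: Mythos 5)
Your proof is correct and follows exactly the paper's argument: both compare the coefficient of $x_0^{\alpha_0}\cdots x_n^{\alpha_n}$ in $(x_0+\cdots+x_n)^{kd}$ with its coefficient in the product $(x_0+\cdots+x_n)^{d}\cdots(x_0+\cdots+x_n)^{d}$, expanding each factor by the multinomial theorem. Your version simply spells out the bookkeeping that the paper leaves implicit.
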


\begin{proof}
    The coefficients of $x_0^{\alpha_0}\cdots x_n^{\alpha_n}$ in $(x_0+\ldots+x_n)^{kd}$
    and in $(x_0+\ldots+x_n)^{d}\cdots (x_0+\ldots+x_n)^{d}$ are equal. By computing these numbers, we obtain Equation~\eqref{eq:combinatorics}.
\end{proof}

We now prove a Taylor-like identity, which is at the heart of the algorithm.
\begin{prop}\label{prop:decompo_f_canonical}
    Let $f\in \Sym^{kd}(W^*)$, let $b_0,\ldots, b_r$ denote the canonical monomial basis of $\Sym^d(W^*)$ in lexicographical order, and let $b_0^*, \ldots, b_r^*$ be its dual basis. Then we have
    \begin{equation}
        \label{eq:decompo_f_canonical}
        \frac{(kd)!}{d!^k}f = \sum_{0\leq i_1,\ldots, i_k\leq r}\D(b_{i_1}^*\cdots b_{i_k}^*, f)b_{i_1}\cdots b_{i_k}\,.
    \end{equation}
\end{prop}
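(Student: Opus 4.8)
The plan is to reduce the identity to monomials and then invoke the combinatorial identity~\eqref{eq:combinatorics}. Since $\D$ is $K$-bilinear and $\D(b_{i_1}^*\cdots b_{i_k}^*, f)$ is a scalar (the first argument lies in $\Sym^{kd}(W)$ and pairs with $f\in\Sym^{kd}(W^*)$), both sides of~\eqref{eq:decompo_f_canonical} are $K$-linear in $f$. Hence it suffices to verify the identity when $f$ runs over the monomial basis of $\Sym^{kd}(W^*)$, say $f = x^\gamma := x_0^{\gamma_0}\cdots x_n^{\gamma_n}$ with $\sum_i\gamma_i = kd$.

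First I would make the dual basis explicit. Writing $b_\alpha = x^\alpha$ for the degree-$d$ monomials and $w^\alpha := w_0^{\alpha_0}\cdots w_n^{\alpha_n}$, a direct computation from Definition~\ref{def:D} gives $\D(w^\alpha, x^\beta) = \alpha!\,\delta_{\alpha,\beta}$ whenever $|\alpha| = |\beta| = d$, because the iterated partial derivative of $x^\beta$ is nonzero precisely when $\beta_i\geq\alpha_i$ for all $i$, which under equal total degree forces $\alpha=\beta$. Here $\alpha! := \alpha_0!\cdots\alpha_n!$. By the defining relation $\D(b_\alpha^*, b_\beta)=\delta_{\alpha,\beta}$, this identifies the dual basis as $b_\alpha^* = \tfrac{1}{\alpha!}\,w^\alpha$.

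Next I would expand the right-hand side. Indexing the factors by degree-$d$ multi-indices $\beta_1,\ldots,\beta_k$ and setting $\sigma = \beta_1+\cdots+\beta_k$, commutativity of $\Sym(W)$ yields $b_{i_1}^*\cdots b_{i_k}^* = \tfrac{1}{\beta_1!\cdots\beta_k!}\,w^{\sigma}$ and $b_{i_1}\cdots b_{i_k} = x^{\sigma}$. The same derivative computation at degree $kd$ gives $\D(w^\sigma, x^\gamma) = \gamma!\,\delta_{\sigma,\gamma}$, so the Kronecker delta collapses the sum to the tuples with $\beta_1+\cdots+\beta_k = \gamma$, leaving
\[
\sum_{0\leq i_1,\ldots,i_k\leq r}\D(b_{i_1}^*\cdots b_{i_k}^*, x^\gamma)\,b_{i_1}\cdots b_{i_k}
= \left(\gamma!\!\!\sum_{\substack{|\beta_1| = \cdots = |\beta_k| = d\\ \beta_1+\cdots+\beta_k = \gamma}}\!\! \frac{1}{\beta_1!\cdots\beta_k!}\right) x^\gamma.
\]

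Finally, the inner sum is exactly what the preceding lemma evaluates. Recognizing $\tfrac{d!}{\beta_i!} = \binom{d}{\beta_{i,0},\ldots,\beta_{i,n}}$ and factoring out $(d!)^k$, Equation~\eqref{eq:combinatorics} with $\alpha=\gamma$ gives $\sum \tfrac{1}{\beta_1!\cdots\beta_k!} = \tfrac{(kd)!}{(d!)^k\,\gamma!}$. Substituting back, the coefficient becomes $\gamma!\cdot\tfrac{(kd)!}{(d!)^k\gamma!} = \tfrac{(kd)!}{(d!)^k}$, so the right-hand side equals $\tfrac{(kd)!}{(d!)^k}x^\gamma$, matching the left-hand side. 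I expect no genuine obstacle here: the only real content is the combinatorial identity, which is already established, so the remaining effort is careful multi-index bookkeeping together with the explicit description of the dual basis $b_\alpha^* = \tfrac{1}{\alpha!}w^\alpha$.
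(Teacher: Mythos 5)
Your proof is correct and follows essentially the same route as the paper: reduce to monomials by linearity, identify the dual basis explicitly as $b_\alpha^* = \tfrac{1}{\alpha!}w^\alpha$, collapse the sum via the Kronecker delta from the apolarity pairing, and conclude with the multinomial identity~\eqref{eq:combinatorics}. Your write-up is somewhat more explicit than the paper's about why the dual basis takes that form and why the sum collapses, but the argument is the same.
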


\begin{proof}
    Since $\D$ is bilinear, we prove that statement for monomials.
    
    Let $f = x_0^{\alpha_0}\cdots x_n^{\alpha_n}\in~\Sym^{kd}(W^*)$.
    Further, if we write $b_i = \prod_{j=0}^{n}x_j^{\gamma_{i,j}}$, where $\gamma_{i,j}$ is a nonnegative integer,
    then for any integers $0\leq i_1,\ldots,i_k\leq r$, we have \[b_{i_1}\cdots b_{i_k} = \prod_{j = 0}^{n}x_i^{\sum_{l = 1}^{k} \gamma_{i_l,j}}= \prod_{j = 0}^{n}x_i^{\sum_{l = 1}^{k} \beta_{l,j}},\]
    where we set $\beta_{l,j} = \gamma_{i_l, j}$. 

    We compute the right member of Equation~\eqref{eq:decompo_f_canonical}:
    \begin{align*}
        & \sum_{0\leq i_1,\ldots, i_k\leq r}\D(b_{i_1}^*\cdots b_{i_k}^*, f)b_{i_1}\cdots b_{i_k}\\
        &= \sum_{0\leq i_1,\ldots, i_k\leq r}\D(b_{i_1}^*\cdots b_{i_k}^*, f)b_{i_1}\cdots b_{i_k}\\
        &= \sum_{\substack{(\beta_{l, j})\in\mathbb{Z}_{\geq 0}^{k\times (n+1)}\\\forall l,\, \sum_{j = 0}^{n}\beta_{l, j} = d}}\D\left(\frac{1}{\beta_{1, 0}!\cdots \beta_{k, n}!}w_0^{\sum_{l = 1}^{k}\beta_{l, 0}}\cdots w_n^{\sum_{l = 1}^{k}\beta_{l, n}}, f\right)x_0^{\sum_{l = 1}^{k}\beta_{l, 0}}\cdots x_n^{\sum_{l = 1}^{k}\beta_{l, n}}\\
        &= \sum_{(\beta_{l, j})\in J_\alpha}\frac{\alpha_0! \cdots \alpha_n!}{\beta_{1, 0}!\cdots \beta_{k, n}!}f\\
        &= \frac{\alpha_0! \cdots \alpha_n!}{d!^k}\sum_{(\beta_{l, j})\in J_\alpha}\frac{d!}{\prod_{j = 0}^{n}\beta_{1, j}!}\cdots\frac{d!}{\prod_{j = 0}^{n}\beta_{k, j}!}f\\
        &= \frac{\alpha_0! \cdots \alpha_n!}{d!^k}\sum_{(\beta_{l, j})\in J_\alpha}\binom{d}{\beta_{1, 0},\ldots,\beta_{1, n}}\cdots\binom{d}{\beta_{k, 0},\ldots,\beta_{k, n}}f\\
        &= \frac{\alpha_0! \cdots \alpha_n!}{d!^k}\binom{kd}{\alpha_0, \ldots, \alpha_n}f\\
        &= \frac{(kd)!}{d!^k}f\,.
    \end{align*}

    Since $\mathrm{char}(K) > kd$ or $\mathrm{char}(K) = 0$, all the operations above are well-defined.
\end{proof}

A somewhat similar computation is carried out in~\cite[Proposition 3.2]{ehrenborg}.

\begin{rem}
    To avoid problems in positive characteristic, the author also tried to
    use Hasse-Schmidt derivatives~\citep{hasse} instead of partial derivatives.
    However, the factorial numbers do not cancel out with the Hasse Derivative as we might expect,
    thus the conditions on the characteristic of $K$ must remain.
\end{rem}

\begin{cor}\label{cor:decompo_f}
    Let $f\in \Sym^{kd}(W^*)$, let $q_0\ldots,q_r$ be a basis of $\Sym^d(W^*)$, and let $q_0^*, \ldots, q_r^*$ denote its dual basis.
    Then we have
    \begin{equation}\label{eq:decomposition_f}
        \frac{(kd)!}{d!^k}f = \sum_{0\leq i_1,\ldots, i_k\leq r}\D(q_{i_1}^*\cdots q_{i_k}^*, f)q_{i_1}\cdots q_{i_k}\,.
    \end{equation}
\end{cor}

\begin{proof}
    We use Lemma~\ref{prop:dual_basis_matrix} and Proposition~\ref{prop:decompo_f_canonical}
    to compute the right hand side of Equation~\eqref{eq:decomposition_f}.
    After some simplifications, we obtain the desired result.
\end{proof}

Let us introduce the Veronese embedding that maps
  $[x_0:\cdots:x_n]$ to all monomials of total degree $d$:\[\function{v_{n,d}}{\mathbb{P}^{n}}{\mathbb{P}^{r}}{[x_0:\cdots:x_n]}{[x_0^d:x_0^{d-1}x_1:\cdots:x_n^d]}\,.\]
It is well-known that $v_{n,d}$ realizes an isomorphism of $\mathbb{P}^{n}$
onto its image, which is defined by irreducible quadratic forms~\cite[Exercise
2.5]{harris}. Let $X_0,\ldots, X_r$ denote coordinates for $\mathbb{P}^r$. 
These quadratic forms can be written as $X_iX_j-X_lX_m$ for some well-chosen $i$, $j$, $l$, and $m$, where we can have $i=j$ or $l=m$.
The number of linearly independent such quadratic forms is \[\dim(\Sym^2(\Sym^d(W^*)))-\dim(\Sym^{2d}(W^*))\,.\]

Let $q_0\ldots,q_r$ be a basis of $\Sym^d(W^*)$.
It is clear that the
morphism \[\function{\varphi}{\mathbb{P}^{n}}{\mathbb{P}^{r}}{[x_0:\cdots:x_n]}{[q_0:\cdots:q_r]}\]
is also an isomorphism of $\mathbb{P}^{n}$ onto its image, which is defined by irreducible quadratic forms,
which reflect the relations that exist among the $q_i$'s.


\medskip

This paper relies heavily on the following key result.

\begin{prop}\label{prop:recover}
    Let \[\tilde{f} = \sum_{0\leq i_1,\ldots, i_k\leq r}\D(q_{i_1}^*\cdots q_{i_k}^*, f)X_{i_1}\cdots X_{i_k},\]
    and let $Q_0,\ldots, Q_s$ be a set of quadratic forms which define $\mathrm{Im}(\varphi)$.

    Then, the knowledge of $\tilde{f}$ and the $Q_i$'s is enough to recover $f'\in\Sym^{kd}(W^*)$ which is $\mathrm{GL}_{n+1}$-equivalent to $f$.
\end{prop}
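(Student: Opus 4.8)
The plan is to read Corollary~\ref{cor:decompo_f} as the statement that $f$ is, up to a harmless scalar, the pullback of $\tilde f$ along $\varphi$. Indeed, $\varphi$ acts on coordinates by $X_i\mapsto q_i$, so substituting $X_i=q_i$ into $\tilde f$ gives
\[
\varphi^*\tilde f=\sum_{0\le i_1,\ldots,i_k\le r}\D(q_{i_1}^*\cdots q_{i_k}^*,f)\,q_{i_1}\cdots q_{i_k}=\frac{(kd)!}{d!^k}\,f.
\]
Hence, if we could recover $\varphi$ itself we would recover $f$. We cannot know $\varphi$ outright (this would essentially amount to knowing $f$), but the $Q_j$ let us recover its image, and from the image we can reconstruct a substitute for $\varphi$ up to the only ambiguity that the statement tolerates, namely $\mathrm{GL}_{n+1}$-equivalence. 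Note that the role of the quadrics is precisely to pin down the embedded variety $V:=\mathrm{Im}(\varphi)\subseteq\mathbb P^r$, so that $\tilde f$ can be pulled back to $\mathbb P^n$.

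First I would set $V$ to be the common zero locus of $Q_0,\ldots,Q_s$. Since $q_0,\ldots,q_r$ is a full basis of $\Sym^d(W^*)=H^0(\mathbb P^n,\mathcal O(d))$, the map $\varphi$ is the embedding attached to the complete linear system $|\mathcal O(d)|$; thus $V$ is projectively equivalent to the Veronese variety $v_{n,d}(\mathbb P^n)$, in particular smooth, irreducible, of dimension $n$ and degree $d^n$. The crux of the argument, and what I expect to be the main obstacle, is to produce from the equations $Q_j$ alone \emph{some} isomorphism $\psi\colon\mathbb P^n\xrightarrow{\ \sim\ }V$. This is a concrete problem of parametrizing a smooth rational variety from its defining quadrics: one may analyse the Gauss map $P\mapsto\mathbb T_PV$, each embedded tangent space being an $n$-plane that encodes a point of the source, or, in the case $d=2$ most relevant to the applications, where $V$ consists of the rank-one symmetric tensors, simply extract the defining linear form of each such tensor. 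The point to stress is that we do not need a canonical $\psi$, only any one.

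Granting such a $\psi$, the conclusion becomes formal. Both $\varphi$ and the composite $\iota\circ\psi$, where $\iota\colon V\hookrightarrow\mathbb P^r$ is the inclusion, are closed embeddings of $\mathbb P^n$ sharing the image $V$; a degree count shows $(\iota\psi)^*\mathcal O_{\mathbb P^r}(1)=\mathcal O_{\mathbb P^n}(d)$ (as the self-intersection must equal $\deg V=d^n$), so $\iota\circ\psi$ is again the complete embedding by $|\mathcal O(d)|$ and differs from $\varphi$ only by an automorphism of the source. Thus there is $\sigma\in\mathrm{Aut}(\mathbb P^n)=\mathrm{PGL}_{n+1}$ with $\iota\circ\psi=\varphi\circ\sigma$, whence
\[
f':=\psi^*\tilde f=\sigma^*(\varphi^*\tilde f)=\frac{(kd)!}{d!^k}\,\sigma^* f.
\]
Lifting $\sigma$ to some $g\in\mathrm{GL}_{n+1}$ and recalling that scalar matrices act by scalars, $f'$ is a nonzero multiple of $g\cdot f$, hence $\mathrm{GL}_{n+1}$-equivalent to $f$. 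As every admissible choice of $\psi$ yields such an $f'$, the data $\tilde f$ and $Q_0,\ldots,Q_s$ indeed suffice to recover a representative of the $\mathrm{GL}_{n+1}$-orbit of $f$.
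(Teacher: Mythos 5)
Your proposal is correct and follows essentially the same route as the paper: both arguments rest on the observation that any parametrization of $\mathrm{Im}(\varphi)$ differs from $\varphi$ by an element of $\mathrm{PGL}_{n+1}$ (automorphisms of $\mathbb{P}^n$), so that pulling back $\tilde f$ along an arbitrary parametrization yields a form $\mathrm{GL}_{n+1}$-equivalent to $f$ via Corollary~\ref{cor:decompo_f}. Your added degree-count justification that any isomorphism $\mathbb{P}^n\xrightarrow{\sim} V$ is again the complete embedding by $|\mathcal{O}(d)|$, and your remarks on how one might actually produce such a parametrization, are welcome refinements of details the paper leaves implicit (the latter being deferred to its Section~\ref{sec:param}).
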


\begin{proof}
    We assume that $\varphi$ and $f$ are not known, otherwise the statement becomes trivial.

    We know that there exists a parametrization of $\mathrm{Im}(\varphi)$, because $\varphi$ is one of them. 
    Let $\varphi' : \mathbb{P}^n\longrightarrow\mathbb{P}^r$ be any such parametrization. Since $\varphi$ and $\varphi'$ have the same image, 
    we can consider the automorphism $\varphi^{-1}\circ \varphi'$ of $\mathbb{P}^n$, and it is known that the group of automorphisms of $\mathbb{P}^n$ is $\mathrm{PGL}_{n+1}$~\citep[Exercise 18.7]{harris}.
    Thus, the parametrizations $\varphi$ and $\varphi'$ differ only by an element of $\mathrm{PGL}_{n+1}$.
    
    If we let $q_0',\ldots, q_r'\in\Sym^d(W^*)$ be coordinates of $\varphi'$, then $\tilde{f}(q_0',\ldots,q_r')$ is $\mathrm{GL}_{n+1}$-equivalent to $f$ by the previous analysis.
\end{proof}

\begin{rem}
    The parametrization step in that proof is not constructive.
    To develop an effective and efficient algorithm, we require a constructive approach. 
    Section~\ref{sec:param} addresses this in detail.
\end{rem}

It remains to see how we can write the coefficients of $\tilde{f}$ and the quadratic relations $Q_i$ as invariants. 
The use of a linear basis of covariants of a given degree will be primordial to achieve that goal.

\subsection{Generalization to tensor spaces}
\label{sec:generalization}
\mbox{}

Equation~\eqref{eq:decomposition_f} can be extended to tensor spaces.
Let $W_1,\ldots, W_s$ be finite-dimensional $K$-vector spaces.
Let $\D_s$ denote the composition of the operators $\D$ for $W_1,\ldots, W_s$.
In that situation, a similar statement as Corollary~\ref{cor:decompo_f} can be made.

\begin{prop}\label{prop:decompo_f_tensor}
    Let $k, d_1,\ldots, d_s>0$.
    Let $f\in \Sym^{kd_1}(W_1^*)\otimes \cdots\otimes \Sym^{kd_s}(W_s^*)$, and let $q_0\ldots,q_r$ be a basis of $\Sym^{d_1}(W_1^*)\otimes \cdots\otimes \Sym^{d_s}(W_s^*)$.
    Let $q_0^*, \ldots, q_r^*$ denote its dual basis with respect to $\D_s$ (such a basis exists, and is unique).
    Then we have
    \begin{equation}\label{eq:decomposition_f_tensor}
        \frac{(kd_1)!}{d_1!^k}\cdots \frac{(kd_s)!}{d_s!^k}f = \sum_{0\leq i_1,\ldots, i_k\leq r}\D_s(q_{i_1}^*\cdots q_{i_k}^*, f)q_{i_1}\cdots q_{i_k}\,.
    \end{equation}
\end{prop}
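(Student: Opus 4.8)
The plan is to reduce the tensor statement to the single-factor case already proved in Proposition~\ref{prop:decompo_f_canonical} and Corollary~\ref{cor:decompo_f}, by exploiting the fact that $\D_s$ factors as a composition of the operators $\D$ acting on each tensor factor independently. First I would fix a canonical monomial basis on each factor: let $b^{(t)}_0,\ldots,b^{(t)}_{r_t}$ denote the lexicographically ordered monomial basis of $\Sym^{d_t}(W_t^*)$, with dual basis $b^{(t)*}_0,\ldots,b^{(t)*}_{r_t}$ with respect to the factor-$t$ pairing $\D$. The tensor products $b^{(1)}_{j_1}\otimes\cdots\otimes b^{(s)}_{j_s}$ then form a basis of $\Sym^{d_1}(W_1^*)\otimes\cdots\otimes\Sym^{d_s}(W_s^*)$, and since $\D_s$ is the composition of the factorwise operators, the dual basis is exactly $b^{(1)*}_{j_1}\otimes\cdots\otimes b^{(s)*}_{j_s}$; this establishes existence of the dual basis, and uniqueness follows from Lemma~\ref{lem:mat_inv} applied in the tensor setting (the pairing matrix is the Kronecker product of invertible matrices, hence invertible).

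Next I would prove the identity for the canonical basis, which is the tensor analogue of Proposition~\ref{prop:decompo_f_canonical}. By bilinearity in each slot it suffices to treat a pure tensor of monomials $f = m_1\otimes\cdots\otimes m_s$ with $m_t\in\Sym^{kd_t}(W_t^*)$. Because the sum on the right of Equation~\eqref{eq:decomposition_f_tensor} runs over $k$-fold products of the tensor basis elements, each such product splits as a tensor product of factorwise $k$-fold products, and $\D_s$ separates accordingly:
\begin{equation*}
\D_s\bigl((b^{(1)}_{\cdot}\otimes\cdots)^*\cdots,\,m_1\otimes\cdots\otimes m_s\bigr)
= \prod_{t=1}^{s}\D\bigl(\text{(factor-}t\text{ product)},\,m_t\bigr).
\end{equation*}
The right-hand sum therefore factors as a product over $t$ of the single-factor sums already computed in Proposition~\ref{prop:decompo_f_canonical}, each of which evaluates to $\frac{(kd_t)!}{d_t!^{\,k}}m_t$. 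Multiplying these $s$ factors reproduces the scalar $\prod_t \frac{(kd_t)!}{d_t!^{\,k}}$ times $m_1\otimes\cdots\otimes m_s = f$, as claimed.

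Finally I would pass from the canonical basis to an arbitrary basis $q_0,\ldots,q_r$, exactly as in the proof of Corollary~\ref{cor:decompo_f}. Writing $S$ for the change-of-basis matrix from the tensor monomial basis to $(q_i)_i$, the tensor version of Lemma~\ref{prop:dual_basis_matrix} gives that ${}^tS^{-1}$ is the change of basis for the dual bases; substituting into the canonical identity and using that the bilinear operator $\D_s$ and the symmetric products are compatible with these change-of-basis matrices, the matrices $S$ and ${}^tS^{-1}$ cancel and we recover Equation~\eqref{eq:decomposition_f_tensor}. The main obstacle I anticipate is purely bookkeeping: making the factorization of $\D_s$ over the tensor slots fully rigorous, since one must track carefully how the combinatorial multinomial identity of Equation~\eqref{eq:combinatorics} applies independently in each factor and how the normalizing constants multiply; once this separation is justified, the result is an immediate product of $s$ copies of the already-established single-factor computation.
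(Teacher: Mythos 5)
Your proposal is correct and follows essentially the same route as the paper: the paper's (very terse) proof is simply ``induction on $s$, since $\D$ acts independently and successively on the different spaces $W_i$,'' which is exactly the factorization over tensor slots that you spell out before reducing to Proposition~\ref{prop:decompo_f_canonical} and the change-of-basis argument of Corollary~\ref{cor:decompo_f}. Your version just makes the bookkeeping explicit where the paper leaves it to the reader.
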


This can be proven by induction on $s$, since $\D$ acts independently and successively on the different spaces $W_i$.

Let us define a morphism $\varphi$, which associates to a point of $\prod_i\mathbb{P}^{\dim (W_i)-1}$ the $q_j$'s evaluated at this point.
Its image is a Segre-Veronese variety, and $\varphi$ realizes an isomorphism of the projective variety $\prod_i\mathbb{P}^{\dim (W_i)-1}$ onto its image.
The algorithmic solution for the parametrization presented in Section~\ref{sec:param} extends naturally to that case.

\subsection{Invariant theory}
\label{sec:invariant_theory}
\mbox{}

In this section, we introduce some notions of invariant theory.
\medskip

Let $n$ and $d$ be positive integers, and let $K$ be an algebraically closed field. Take $W$ as a $(n+1)$-dimensional $K$ vector space.

\begin{definition}\label{def:cov}
    Let $k > 0$ and $r\geq 0$ be integers.
    A \textit{covariant} (resp. \textit{contravariant}) of $\Sym^{k}(W^*)$ of order $r$ is an $\mathrm{SL}(W)$-equivariant homogeneous polynomial map
    \begin{align*}
        &C:\,\Sym^{k}(W^*)\rightarrow \Sym^r(W^*)\\
    \text{(resp. }&C:\,\Sym^{k}(W^*)\rightarrow \Sym^r(W)\,\text{).}
    \end{align*}
    The \textit{degree} $d$ of $C$ is its degree as a homogeneous polynomial map.
    In the special case $r=0$, $C$ is called an \textit{invariant}. Moreover, the
    \textit{weight} (or index) of a covariant the number $(kd-r)/(n+1)$
    (resp. $(kd+r)/(n+1)$).
\end{definition}

\begin{rem}
    In fact, when a covariant is transformed by a matrix $A\in \mathrm{GL}_{n+1}$, we have
    \[C(A\cdot f) = \det(A)^{-\frac{kd-r}{n+1}} (A\cdot C(f)),\]
    for any $f\in\Sym^k(W^*)$.
    In order for this expression to be well-defined, the weight must be an integer. 
    
    Thus, we note that given a value of $k$ and $n$, not all values of $d$ and $r$ give an integer.
    For instance, the case of binary forms of even degrees, corresponding to $k = 2k'$ and $n = 1$, implies that $r$ must be even. 
    Thus there exist no covariants of odd degree for binary forms of even degree.
\end{rem}

\begin{definition}
    Let $f\in\Sym^{k}(W^*)$. We say that the covariants (or contravariants) $q_1,\ldots,q_r$ of order $d$ of $\Sym^{k}(W^*)$ are \textit{linearly independent at} $f$
    if the forms $q_i(f)$ are linearly independent.
    
    We say that the $q_i$'s are \textit{generically linearly independent} if there exists a dense open subvariety \[U\subseteq\Sym^k(W^*)//\mathrm{SL}_{n+1} = \mathrm{Spec}(K[\Sym^k(W^*)]^{\mathrm{SL}_{n+1}})\]
    such that for every $f\in U$, the $q_i$'s are linearly independent at $f$.
\end{definition}

\begin{rem}
    Since the quotient variety $\Sym^k(W^*)//\mathrm{SL}_{n+1} = \mathrm{Spec}(K[\Sym^k(W^*)]^{\mathrm{SL}_{n+1}})$ is affine by definition, and its coordinate ring is a domain,
    the variety $\Sym^k(W^*)//\mathrm{SL}_{n+1}$ is irreducible. Therefore, if there exists one $f\in\Sym^k(W^*)$ such that the $q_i$'s are linearly independent at $f$, it means that the 
    $q_i$'s are generically linearly independent.
\end{rem}

To construct covariants and contravariants, one usually starts with a form whose coefficients are indeterminates,
and then applies equivariant transformations. The apolarity bilinear operator $\D$ turns out to be equivariant in a sense that we will explore (see Lemma~\ref{lem:D_cov}).
Another valuable tool is the transvectant~\citep{olver},
which is traditionally used for the description of the algebra of covariants and invariants of binary forms.

However, in general, it is not possible to construct all covariants and invariants through repeated iterations of the transvectant.
We refer the interested reader to~\cite{kohel}, in which the authors recall several ways to construct covariants and contravariants.
We now show how the apolar bilinear form $\D$ defined in Definition~\ref{def:D} can be used to construct covariants and contravariants.

\begin{definition}
    Let $p$ be a contravariant of order $r_p$ of  $\Sym^{d}(W^*)$, and let $q$ bea covariant of order $r_q$ of $\Sym^{d}(W^*)$.
    We define $\D(p, q)$ pointwise: \[[\D(p, q)](f) = \D(p(f), q(f))\in\Sym^{r_q-r_p}(W^*)\] for all $f\in \Sym^d(W^*)$, with the convention that $\Sym^{-r}(W^*) = \{0\}$ for any positive integer $r$.
    We define symmetrically $\D(p, q)$ by \[[\D(q, p)](f) = \D(q(f), p(f))\in\Sym^{r_p-r_q}(W^*)\] for all $f\in \Sym^d(W^*)$.
\end{definition}

\begin{lemma}\label{lem:D_cov}
    Let $p$ be a contravariant of $\Sym^{d}(W^*)$ and $q$ a covariant of $\Sym^{d}(W^*)$ of respective orders $r_p, r_q$ and degrees $d_p, d_q$.
    Then $\D(p, q)$ (resp. $\D(q,p)$) is a covariant (resp. contravariant) of $\Sym^{d}(W^*)$ of degree $d_p+d_q$.
\end{lemma}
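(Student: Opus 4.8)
The plan is to verify the two claimed properties of $\D(p,q)$ separately: that it is a \emph{homogeneous polynomial map} of degree $d_p+d_q$, and that it is \emph{$\mathrm{SL}(W)$-equivariant} into the correct target space $\Sym^{r_q-r_p}(W^*)$. The degree count is the easy part. Since $p$ is a polynomial map of degree $d_p$ and $q$ is a polynomial map of degree $d_q$, writing $f\mapsto \D(p(f),q(f))$ shows that each coefficient of the output is, by the bilinearity of $\D$ (Definition~\ref{def:D}), a product of a degree-$d_p$ polynomial in the coefficients of $f$ with a degree-$d_q$ polynomial in those coefficients. Hence the total degree is $d_p+d_q$, and homogeneity is preserved because the product of homogeneous polynomials is homogeneous. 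The order of the output is $r_q - r_p$ by the very definition of $\D$ as lowering the degree of a form by differentiation, which is exactly what Definition~\ref{def:D} records; the convention $\Sym^{-r}(W^*)=\{0\}$ handles the degenerate case $r_q < r_p$.

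The substantive point is equivariance, and this is where I expect the main obstacle to lie. First I would establish the key algebraic compatibility of $\D$ with the $\mathrm{SL}(W)$-action, namely that for all $M\in\mathrm{SL}(W)$, all $P\in\Sym(W)$ and all $R\in\Sym(W^*)$,
\[
\D(M\cdot P,\, M\cdot R) = M\cdot \D(P,R).
\]
This is the heart of the lemma: differentiation with respect to the $x_i$ intertwines correctly with the contragredient action precisely because the pairing $W\times W^*\to K$ is $\mathrm{SL}(W)$-invariant (indeed $\mathrm{GL}(W)$-invariant up to the natural scaling). I would prove this first for the elementary case where $P$ is a single $w_i$ (so $\D(w_i,\,\cdot\,)=\partial/\partial x_i$) using the chain rule under the linear change of variables induced by ${}^tM^{-1}$, and then extend to arbitrary monomials $w_0^{\alpha_0}\cdots w_n^{\alpha_n}$ by iterating, and finally to all of $\Sym(W)$ by bilinearity. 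The fact that $\det(M)=1$ for $M\in\mathrm{SL}(W)$ is what makes the identity clean, with no determinant factor appearing.

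Granting that identity, equivariance of $\D(p,q)$ follows formally. For $M\in\mathrm{SL}(W)$ and $f\in\Sym^d(W^*)$ I would compute
\[
[\D(p,q)](M\cdot f) = \D\big(p(M\cdot f),\, q(M\cdot f)\big) = \D\big(M\cdot p(f),\, M\cdot q(f)\big),
\]
using that $p$ is a contravariant and $q$ a covariant, so that $p(M\cdot f)=M\cdot p(f)$ and $q(M\cdot f)=M\cdot q(f)$ (here the $\mathrm{SL}(W)$-equivariance means the determinant factors are trivial). Applying the compatibility identity to the right-hand side yields $M\cdot \D(p(f),q(f)) = M\cdot[\D(p,q)](f)$, which is exactly the equivariance required of a covariant. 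The symmetric case $\D(q,p)$, landing in $\Sym^{r_p-r_q}(W)$, runs along identical lines with the roles of $W$ and $W^*$ exchanged, so $\D(q,p)$ is a contravariant. The only care needed throughout is to track the target spaces so that the orders $r_q-r_p$ and $r_p-r_q$ come out correctly and the zero-space convention is invoked when an order would be negative.
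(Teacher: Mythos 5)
Your argument is correct and complete; the paper in fact states this lemma without proof, and what you write is precisely the argument it implicitly relies on: reduce everything to the compatibility $\D(M\cdot P,\,M\cdot R)=M\cdot\D(P,R)$, checked on directional derivatives via the chain rule and extended to monomials by iteration, after which equivariance of $f\mapsto\D(p(f),q(f))$ and the degree count $d_p+d_q$ are formal. One small correction: that compatibility identity holds for all $M\in\mathrm{GL}(W)$, not because $\det M=1$ — the pairing $W\times W^*\to K$ is invariant under the full contragredient action; the determinant only enters through the weights of $p$ and $q$ if one tracks the $\mathrm{GL}$-action, which is irrelevant for the $\mathrm{SL}(W)$-equivariance the lemma asserts.
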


\begin{lemma}\label{lem:dual_basis}
    Let $r = \dim_K(\Sym^d(W^*))-1$, and let $l$ be a positive integer. Let $q_0,\ldots,q_r$ be covariants of order $d$ of $\Sym^l(W^*)$,
    which are generically linearly independent. Let $S$ be the change of
    basis matrix from the canonical basis $(b_i)_i$ of $\Sym^d(W^*)$ to $(q_i)_i$, and let $\Delta$ be its determinant.
    Then $\Delta$ is a non-zero invariant of $\Sym^l(W^*)$, and $\Delta {}^tS^{-1}$ is a matrix whose columns are contravariants, precisely the dual basis $q_0^*,\ldots, q_r^*$ multiplied by the invariant $\Delta$.
\end{lemma}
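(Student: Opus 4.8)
The plan is to exploit the equivariance of the covariants $q_i$ to track how $S$ transforms under $\mathrm{SL}(W)$, and then to invoke Lemma~\ref{prop:dual_basis_matrix} to identify the columns of $\Delta\,{}^tS^{-1}$ with the rescaled dual basis. Throughout I regard $S=S(f)$ as a matrix whose entries are polynomial functions of $f\in\Sym^l(W^*)$: writing $q_j(f)=\sum_i S_{ij}(f)\,b_i$, each $S_{ij}$ is polynomial because $q_j$ is a polynomial map. Generic linear independence of the $q_i$ means precisely that $\det S(f)\neq 0$ on a dense open set, so $\Delta=\det S$ is not the zero function.

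First I would establish the transformation law for $S$. Since each $q_j$ is a covariant and $\det M=1$ for $M\in\mathrm{SL}(W)$, the determinant factor appearing in the Remark following Definition~\ref{def:cov} disappears and we get $q_j(M\cdot f)=M\cdot q_j(f)$. Let $\rho(M)$ denote the matrix of the (contragredient) action of $M$ on $\Sym^d(W^*)$ in the basis $(b_i)$, so that $M\cdot b_i=\sum_k \rho(M)_{ki}\,b_k$. Expanding $q_j(M\cdot f)=M\cdot q_j(f)$ in coordinates yields $S(M\cdot f)=\rho(M)\,S(f)$, whence $\Delta(M\cdot f)=\det\rho(M)\,\Delta(f)$. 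The point is that $\det\circ\rho$ is a character of $\mathrm{SL}(W)$, and $\mathrm{SL}_{n+1}$ admits no nontrivial characters; thus $\det\rho(M)=1$, and $\Delta$ is an invariant of $\Sym^l(W^*)$, nonzero by the previous paragraph.

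For the second assertion, fix a generic $f$ so that $(q_i(f))_i$ is a basis with dual basis $(q_i^*(f))_i$. By Lemma~\ref{prop:dual_basis_matrix} applied at $f$, the matrix ${}^tS(f)^{-1}$ is the change of basis matrix from $(b_i^*)$ to $(q_i^*(f))$, so the $j$-th column of $\Delta\,{}^tS^{-1}$ records the coordinates of $\Delta(f)\,q_j^*(f)$ in the basis $(b_i^*)$. Two things must then be checked for the map $f\mapsto \Delta(f)\,q_j^*(f)$ to be a contravariant of order $d$. That it is a polynomial map follows from the identity $\Delta\,{}^tS^{-1}={}^t\!\mathrm{adj}(S)$: the adjugate has cofactor entries, which are polynomials in the entries of $S$ and hence in $f$, so the a priori rational map $q_j^*$ becomes polynomial after multiplication by $\Delta$. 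For equivariance I would use that the apolarity pairing $\D$ is $\mathrm{GL}(W)$-equivariant (the property underlying Lemma~\ref{lem:D_cov}), so that it is $\mathrm{SL}(W)$-invariant on $\Sym^d(W)\times\Sym^d(W^*)$, i.e. $\D(M\cdot p,M\cdot q)=\D(p,q)$. Combining this with $q_i(M\cdot f)=M\cdot q_i(f)$ gives $\D(M\cdot q_j^*(f),\,q_i(M\cdot f))=\delta_{ij}$, so by uniqueness of the dual basis $q_j^*(M\cdot f)=M\cdot q_j^*(f)$. Since $\Delta$ is invariant, $\Delta(M\cdot f)\,q_j^*(M\cdot f)=M\cdot\bigl(\Delta(f)\,q_j^*(f)\bigr)$, which is the desired equivariance.

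The main obstacle, and the reason the statement is phrased with the factor $\Delta$ rather than with $q_j^*$ alone, is that $q_j^*$ is only a rational map, well-defined on the linear-independence locus but with poles along $\{\Delta=0\}$; the crux is verifying that multiplication by $\Delta=\det S$ clears exactly these denominators, which is what the passage to the adjugate accomplishes. The remaining care is purely bookkeeping: ensuring that the invariance of $\D$ and the no-characters argument for $\det\rho$ are applied with the correct contragredient actions on $\Sym^d(W)$ and $\Sym^d(W^*)$.
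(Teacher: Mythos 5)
Your proof is correct. The paper states Lemma~\ref{lem:dual_basis} without any proof, so there is nothing to compare against; your route --- the transformation law $S(M\cdot f)=\rho(M)\,S(f)$ plus the triviality of characters of $\mathrm{SL}_{n+1}$ for the invariance of $\Delta$, the adjugate identity $\Delta\,{}^tS^{-1}={}^t\mathrm{adj}(S)$ for polynomiality, and the invariance of the apolarity pairing combined with uniqueness of the dual basis for equivariance --- is the natural argument and supplies exactly the details the paper leaves implicit. Two small points are worth making explicit to fully match Definition~\ref{def:cov}: each column of ${}^t\mathrm{adj}(S)$ is homogeneous (the $j$-th column has degree $\sum_{k\neq j}\deg q_k$, one factor from each other column of $S$), and the equivariance identity $\Delta q_j^*(M\cdot f)=M\cdot\bigl(\Delta(f)q_j^*(f)\bigr)$, which you establish at generic $f$, extends to all of $\Sym^l(W^*)$ because both sides are polynomial maps agreeing on a dense open subset.
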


\subsection{Main theorem}
\label{sec:main_thm}
\mbox{}

We have all the tools at our disposal to present the main results of this paper, Theorem~\ref{thm:general} 
and its Corollary~\ref{cor:general_reconstruction}.

\begin{theorem}\label{thm:general}
    Let $k$, $d$, and $n$ be positive integers. Let $K$ be an algebraically closed field of characteristic $0$ or $p > kd$.
    Let $W$ be a $K$-vector space with basis $w_0,\ldots, w_n$ and dual basis $x_0,\ldots, x_n$.
    Let $f\in\Sym^{kd}(W^*)$, and let $r = \dim_K(\Sym^d(W^*))-1$.
    We assume that there exist $r+1$ covariants of order $d$ of $\Sym^{kd}(W^*)$ which
    are linearly independent at $f$. Let $q_0,\ldots, q_r$ be such covariants.
    Let us define \[\function{\varphi}{\mathbb{P}^{n}}{\mathbb{P}^{r}}{[x_0:\cdots:x_n]}{[q_0(f):\cdots:q_r(f)]}.\]

    Let $X_0, \ldots, X_r$ be coordinates for $\mathbb{P}^{r}$.
    We define \[\tilde{f} = \sum_{0\leq i_1,\ldots, i_k\leq r}\D(\Delta
      q_{i_1}^*\cdots \Delta q_{i_k}^*, \Id)(f)X_{i_1}\cdots X_{i_k},\]
    where $\Id$ is the identity covariant of $\Sym^{kd}(W^*)$, and we let $Q_0,\ldots, Q_s$ be a set of quadratic forms in the $X_i's$ which define $\mathrm{Im}(\varphi)$.

    The coefficients of $\tilde{f}$ are invariants of $\Sym^{kd}(W^*)$, and
    the coefficients of the $Q_i$'s can be chosen to have coefficients which can be computed in terms of invariants of $\Sym^{kd}(W^*)$.
    
    Moreover, the knowledge of $\tilde{f}$ and the $Q_i$'s is enough to recover $f'\in\Sym^{kd}(W^*)$ which is $\mathrm{GL}_{n+1}$-equivalent to $f$.
\end{theorem}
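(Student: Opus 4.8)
The plan is to establish the three assertions separately and to reduce the final reconstruction claim to the already-proved Proposition~\ref{prop:recover}. First I would handle the coefficients of $\tilde f$ by a bookkeeping of orders. By Lemma~\ref{lem:dual_basis} each $\Delta q_i^*$ is a contravariant of order $d$ of $\Sym^{kd}(W^*)$, and since the pointwise product of contravariants is again a contravariant (orders and degrees adding), $\Delta q_{i_1}^*\cdots \Delta q_{i_k}^*$ is a contravariant of order $kd$. As $\Id$ is the identity covariant of order $kd$, Lemma~\ref{lem:D_cov} shows that $\D(\Delta q_{i_1}^*\cdots \Delta q_{i_k}^*,\Id)$ is a covariant of order $kd-kd=0$, that is, an invariant; evaluating at $f$ yields the coefficient of $X_{i_1}\cdots X_{i_k}$ in $\tilde f$.

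Next I would treat the quadrics. A form $Q=\sum_{i\le j}c_{ij}X_iX_j$ vanishes on $\mathrm{Im}(\varphi)$ exactly when $\sum_{i\le j}c_{ij}\,q_i(f)q_j(f)=0$ in $\Sym^{2d}(W^*)$, and the quadrics cutting out this Veronese-type image are precisely this space of linear relations among the products $q_iq_j$. To express such relations over the field of invariants, I would pair the $q_iq_j$, which are covariants of order $2d$, against the contravariants $\Delta q_a^*\,\Delta q_b^*$ of order $2d$: each $\D(\Delta q_a^*\,\Delta q_b^*,\,q_iq_j)$ is an invariant by the same order count as above. Because the $q_a^*(f)$ form a basis of $\Sym^d(W)$ and the multiplication map $\Sym^d(W)\otimes\Sym^d(W)\to\Sym^{2d}(W)$ is surjective, the products $q_a^*(f)q_b^*(f)$ span $\Sym^{2d}(W)$; combined with the nondegeneracy of the apolarity pairing $\D$ (cf. Lemma~\ref{lem:mat_inv}), this shows that $\sum c_{ij}q_i(f)q_j(f)=0$ is equivalent to the vanishing of all these invariant pairings. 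Hence the admissible coefficient vectors $(c_{ij})$ form the kernel of a matrix whose entries are values of invariants at $f$, and a basis of that kernel, of the expected dimension $\dim\Sym^2(\Sym^d(W^*))-\dim\Sym^{2d}(W^*)$, provides quadrics $Q_0,\ldots,Q_s$ defining $\mathrm{Im}(\varphi)$ whose coefficients lie in the field generated by the invariants of $f$.

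For the reconstruction I would use bilinearity of $\D$ and $\Id(f)=f$ to write $\D(\Delta q_{i_1}^*\cdots\Delta q_{i_k}^*,\Id)(f)=\Delta(f)^k\,\D(q_{i_1}^*(f)\cdots q_{i_k}^*(f),f)$, where $q_i^*(f)$ is the dual basis of the basis $q_i(f)$ of $\Sym^d(W^*)$. Thus the $\tilde f$ of the theorem is $\Delta(f)^k$ times the $\tilde f$ of Proposition~\ref{prop:recover}, and $\Delta(f)\neq 0$ since the $q_i$ are linearly independent at $f$. A nonzero rescaling of $\tilde f$ rescales the reconstructed form by the same scalar, which is realized by a scalar matrix in $\mathrm{GL}_{n+1}$ (here $K$ algebraically closed is used to extract the requisite root), so it does not change the $\mathrm{GL}_{n+1}$-equivalence class; the claim then follows directly from Proposition~\ref{prop:recover}.

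The main obstacle I expect lies in the second step: one must verify that the chosen contravariant pairings detect every relation among the $q_iq_j$, which rests on the surjectivity of the multiplication map together with the perfectness of $\D$ in degree $2d$, and that solving the resulting linear system keeps the quadric coefficients inside the field of invariants rather than forcing a larger extension. The first and third steps are then routine order-degree bookkeeping and a direct appeal to the preceding results.
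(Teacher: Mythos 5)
Your proposal is correct and follows essentially the same route as the paper: invariance of the coefficients via Lemma~\ref{lem:D_cov}, detection of the quadratic relations by pairing the $q_lq_m$ against the spanning family $\Delta q_i^*(f)\Delta q_j^*(f)$ and taking the kernel of the resulting matrix of invariants, and a final appeal to Proposition~\ref{prop:recover}. Your explicit handling of the harmless $\Delta(f)^k$ rescaling is a slightly more careful spelling-out of what the paper phrases as applying Proposition~\ref{prop:recover} to the dual basis ``up to $\Delta$.''
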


\begin{proof}
    Let us further assume that $\varphi$ and $f$ are unknown (otherwise the statement is trivial).
    Let $\Delta q_0^*,\ldots,\Delta q_r^*$ be the set of contravariants defined in Lemma~\ref{lem:dual_basis} of $\Sym^{kd}(W^*)$.
    By assumption, they are linearly independent at $f$. 

    It is clear by Lemma~\ref{lem:D_cov} that the coefficients of $\tilde{f}$ are invariants of $\Sym^{kd}(W^*)$.
    Remains to see how we can compute quadratic forms defining the image of $\varphi$ with invariants.

    The image of $\varphi$ is defined by quadratic forms that reflect the relations between the $q_i(f)$'s. 
    We note that the family $(\Delta q_i^*(f)\Delta q_j^*(f))_{0\leq i,j\leq r}$ generates the space $\Sym^{2d}(W)$,
    and $\D$ is non-degenerate. Therefore for any $Q\in \Sym^{2d}(W^*)$ we have 
    \[\left(\forall\, 0\leq i, j\leq r,~\D(\Delta q_i^*(f)\Delta q_j^*(f), Q) = 0\right) \iff Q = 0.\]

    Thus, one way to find a basis of quadratic relations for the $q_i(f)$'s is to compute the right kernel of the $(r+1)^2\times (r+1)^2$
    matrix of invariants \[\big(\D(\Delta q_i^*\Delta q_j^*\,,
      q_lq_m)(f)\big)_{\substack{0\leq i,j\leq r\\0\leq l, m\leq r}}.\]

    Finally, the last part of the result is a direct consequence of Proposition~\ref{prop:recover}, applied to the basis of covariants $q_0(f),\ldots,q_r(f)$ and its dual basis (up to $\Delta$) $\Delta q_0^*(f),\ldots, \Delta q_r^*(f)$.
\end{proof}

\begin{cor}\label{cor:general_reconstruction}
    For a general $f\in\Sym^{kd}(W^*)$, knowing only the values of the invariants \[\D(\Delta
      q_{i_1}^*\cdots \Delta q_{i_k}^*, \Id)(f)\] for $0\leq i_1,\ldots, i_k\leq r$
      and \[\big(\D(\Delta q_i^*\Delta q_j^*\,,
      q_lq_m)(f)\big)_{\substack{0\leq i,j\leq r\\0\leq l, m\leq r}},\]
    is theoretically enough to recover a form $f'\in\Sym^{kd}(W^*)$ which is $\mathrm{GL}_{n+1}$-equivalent to $f$.
\end{cor}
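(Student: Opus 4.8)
The plan is to treat this corollary as a repackaging of Theorem~\ref{thm:general}, whose entire conclusion depends on the unknown $f$ only through the two displayed families of invariant values. So the whole argument reduces to checking that $\tilde{f}$ and a defining system of quadratic forms for $\mathrm{Im}(\varphi)$ can be reconstructed from these numbers alone, after which the last sentence of Theorem~\ref{thm:general} applies verbatim.

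First I would match the first family of invariants with the coefficients of $\tilde{f}$. By the very definition of $\tilde{f}$ given in Theorem~\ref{thm:general}, the coefficient attached to the monomial $X_{i_1}\cdots X_{i_k}$ is exactly $\D(\Delta q_{i_1}^*\cdots \Delta q_{i_k}^*,\Id)(f)$. Hence the first list of values is literally the coefficient list of $\tilde{f}$, and no further evaluation involving $f$ is needed to write down $\tilde{f}$.

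Next I would reconstruct a basis of quadratic relations for $\mathrm{Im}(\varphi)$ out of the second family. Following the kernel computation in the proof of Theorem~\ref{thm:general}, a quadratic form $Q = \sum_{l,m} c_{l,m} X_l X_m$ vanishes on $\mathrm{Im}(\varphi)$ if and only if its coefficient vector $(c_{l,m})$ lies in the right kernel of the matrix $\big(\D(\Delta q_i^*\Delta q_j^*, q_l q_m)(f)\big)$; this uses that the products $\Delta q_i^*(f)\Delta q_j^*(f)$ span $\Sym^{2d}(W)$ together with the non-degeneracy of $\D$. Thus the second displayed family determines, by pure linear algebra, a basis $Q_0,\ldots,Q_s$ of the space of quadrics cutting out $\mathrm{Im}(\varphi)$.

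Having produced $\tilde{f}$ and the $Q_i$ from the two lists, I would invoke the final assertion of Theorem~\ref{thm:general} (equivalently Proposition~\ref{prop:recover}) to obtain an $f'$ that is $\mathrm{GL}_{n+1}$-equivalent to $f$. The genericity hypothesis enters only to guarantee the standing assumption of the theorem, namely that the chosen covariants $q_0,\ldots,q_r$ are linearly independent at $f$, which holds on a dense open locus. The main conceptual obstacle — and the reason the statement reads \emph{theoretically} — is that the passage from the defining quadrics $Q_i$ to an explicit parametrization $\varphi'$ used inside Proposition~\ref{prop:recover} is non-constructive as stated; turning this existence argument into an effective procedure is exactly what is deferred to Section~\ref{sec:param}.
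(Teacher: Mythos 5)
Your proposal is correct and follows essentially the same route as the paper: the corollary is a direct repackaging of Theorem~\ref{thm:general}, with the first family of invariants read off as the coefficients of $\tilde{f}$ and the second family used, via the right-kernel computation already carried out in the theorem's proof, to produce the quadrics $Q_0,\ldots,Q_s$, after which Proposition~\ref{prop:recover} applies. Your remarks on where genericity enters and on why the recovery is only ``theoretical'' match the paper's own discussion.
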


In practice, it is hard to find a parametrization $\varphi'$ from the $Q_i$'s in practice. 
We will see an algorithmic solution to this problem for small values of $r$.

\section{Reconstruction algorithm}
\label{sec:reconstruction}

In this section, we present a reconstruction algorithm, which, given a set of specialized generating invariants of $K[\Sym^{kd}(W^*)]^{\mathrm{SL}_{n+1}}$,
returns an element of $\Sym^{kd}(W^*)$ with said invariants. It relies on Theorem~\ref{thm:general}, and a parametrization algorithm.
\medskip

Let $k$, $d$, and $n$ be positive integers. Let $W$ be a $K$-vector space with basis $w_0,\ldots, w_n$ and dual basis $x_0,\ldots, x_n$.
Let $r = \dim_K(\Sym^d(W^*))-1$.
Let us assume that there exist $r+1$ covariants of order $d$ of $\Sym^{kd}(W^*)$ which
are generically linearly independent, and we take $q_0,\ldots, q_r$ to be such covariants.

\subsection{Finding a parametrization}\label{sec:param}
\mbox{}

We turn to the problem of finding a parametrization of $\varphi$ from a set of quadratic forms defining its image,
which is a special instance of a challenging problem, concerned with the parametrization of a projective variety from its implicit representation.
Notably, the reverse problem of finding an implicit representation from a parametrization is an equally interesting question which is also hard to solve efficiently.

Our approach leverages the particular geometry of our problem to give an algorithmic solution. The central idea is that we know a parametrization of the canonical Veronese embedding.
By performing a suitable change of basis, we can reduce the problem to the case where the quadratic forms correspond to those defining the image of the canonical Veronese embedding.
\medskip

Let $q = (q_i)$ be a basis of $\Sym^d(W^*)$, and let $q^* = (q_i^*)$ denote its dual basis.
We define
\[Q_{q^*,q} = \big(\D(q_i^*q_j^*\,,
    q_lq_m)\big)_{\substack{0\leq i,j\leq r\\0\leq l, m\leq r}}.\]

Let $f\in\mathrm{Sym}^{kd}(W^*)$ such that there exist $r+1$ covariants of $\mathrm{Sym}^d(W^*)$ which are linearly independent at $f$, 
which we take to be $q_0,\ldots,q_r$. We can form the matrix $Q_{q(f)^*,q(f)}$, and the matrix $Q_{b^*,b}$, where
$b = (b_i)$ denotes the canonical monomial basis of $\Sym^d(W^*)$ in lexicographic order, and $b^*$
its dual basis. Our aim is to find a change of basis to transform $Q_{q(f)^*,q(f)}$ into $Q_{b^*,b}$, in a way which is given by the next lemma.

\begin{lemma}
    For any $M\in\mathrm{GL}_{r+1}$, we have \[Q_{Mq(f)^*, {}^tM^{-1}q(f)} = (M\otimes M)Q_{q(f)^*, q(f)}(M\otimes M)^{-1}\,.\]
\end{lemma}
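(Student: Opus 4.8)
The plan is to prove the identity by a single entrywise computation, unwinding the definition of $Q_{\cdot,\cdot}$ and exploiting the bilinearity of $\D$; there is no conceptual content beyond careful bookkeeping. First I would fix the Kronecker convention for matrices indexed by the double indices $(i,j)$ and $(p,p')$ (both running over $\{0,\ldots,r\}$), namely $(M\otimes N)_{(i,j),(p,p')} = M_{ip}N_{jp'}$, so that in particular $(M\otimes M)^{-1} = M^{-1}\otimes M^{-1}$.

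Before computing, I would check that the left-hand side is well-posed in the notation $Q_{\cdot,\cdot}$, i.e.\ that $Mq(f)^*$ really is the dual basis of ${}^tM^{-1}q(f)$. Writing $a_i = \sum_p M_{ip}\,q_p(f)^*$ and $b_l = \sum_s ({}^tM^{-1})_{ls}\,q_s(f) = \sum_s (M^{-1})_{sl}\,q_s(f)$, bilinearity of $\D$ and $\D(q_p(f)^*, q_s(f)) = \delta_{ps}$ give $\D(a_i,b_l) = \sum_{p,s} M_{ip}(M^{-1})_{sl}\delta_{ps} = (MM^{-1})_{il} = \delta_{il}$, confirming the duality.

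Next I would compute the $\big((i,j),(l,m)\big)$ entry of the left-hand matrix. Expanding $a_ia_j = \sum_{p,p'} M_{ip}M_{jp'}\,q_p(f)^*q_{p'}(f)^*$ and $b_lb_m = \sum_{s,s'} (M^{-1})_{sl}(M^{-1})_{s'm}\,q_s(f)q_{s'}(f)$ and using bilinearity of $\D$ yields
\[\D(a_ia_j,\, b_lb_m) = \sum_{p,p',s,s'} M_{ip}M_{jp'}\,\D\big(q_p(f)^*q_{p'}(f)^*,\, q_s(f)q_{s'}(f)\big)\,(M^{-1})_{sl}(M^{-1})_{s'm}.\]
Finally I would recognize the inner pairing as the $\big((p,p'),(s,s')\big)$ entry of $Q_{q(f)^*,q(f)}$, and the two coefficient products as entries of $M\otimes M$ and of $M^{-1}\otimes M^{-1} = (M\otimes M)^{-1}$ under the convention above; the displayed double sum is then precisely the $\big((i,j),(l,m)\big)$ entry of $(M\otimes M)\,Q_{q(f)^*,q(f)}\,(M\otimes M)^{-1}$, which is the desired identity.

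The only real obstacle is index discipline: matching the transpose-inverse ${}^tM^{-1}$ acting on the basis side with the factor $M^{-1}$ appearing on the \emph{right} of $Q$ after conjugation, and keeping the ordering of the double indices consistent with the chosen Kronecker convention. Once these are pinned down, the computation is mechanical.
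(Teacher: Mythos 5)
The paper states this lemma without proof, so there is nothing to compare against; your entrywise computation (expanding $a_ia_j$ and $b_lb_m$ by bilinearity of $\D$ and identifying the coefficient products as entries of $M\otimes M$ and $(M\otimes M)^{-1}=M^{-1}\otimes M^{-1}$) is exactly the standard verification the author leaves implicit, and it is correct. Your preliminary check that $Mq(f)^*$ is indeed the dual basis of ${}^tM^{-1}q(f)$ is a worthwhile addition, since it justifies the notation $Q_{Mq(f)^*,\,{}^tM^{-1}q(f)}$ in the first place.
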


Thus, our algorithmic solution is to try and find a matrix $M\in\mathrm{GL}_{r+1}$ such that

\[Q_{b^*, b} = (M\otimes M)Q_{q(f)^*, q(f)}(M\otimes M)^{-1}\,,\]
or equivalently 
\begin{equation}\label{eq:good_basis_pullback}
    Q_{b^*, b}(M\otimes M) = (M\otimes M)Q_{q(f)^*, q(f)}\,.
\end{equation}

We know that there exists a matrix $M$ satisfying these equations, which is the change of basis from the basis $q(f)$ to $b$.
Unfortunately $\varphi(f) = q(f)$ is not known, thus we need to understand how to obtain a solution to Equation~\eqref{eq:good_basis_pullback}.
Let us consider $M = (m_{i,j})$ as a matrix of $(r+1)^2$ indeterminates.

Then by changing $p(f)$ into $Mp(f)$, and $p(f)^*$ into ${}^tM^{-1}p(f)^*$,
the quadratic relations we obtain are the ones corresponding to the canonical embedding.
Equation~\eqref{eq:good_basis_pullback} becomes a system of $(r+1)^4$ quadratic equations in $(r+1)^2$ indeterminates.
In our area of application, we can solve that system by computing a Noether normalization~\cite[Lemma 2.5.7]{derksen}.
This yields a linear combination of variables which are algebraically independent $(m_l)_{l\in L}$, where $L\subseteq\{(i,j)~|~0\leq i,j\leq r\}$.
Moreover, any other variable $m_l$ for $l\notin L$ satisfies an integral relation on $K[m_l~|~l\in L]$.

Therefore we can for instance assign arbitrary values to the indeterminates $m_l$ for $l\in L$, and finding the rest of the indeterminates then amounts to 
taking some field extensions. 

Once a solution is known, we update $\tilde{f}(X)$ to $g := \tilde{f}({}^tMX)$,
and the form \[g(b) = g\left(x_0^d,x_0^{d-1}x_1,x_0^{d-1}x_2,\ldots,x_n^d\right)\in\Sym^{kd}(W^*)\] is $\mathrm{GL}_{n+1}$-equivalent to $f$.

\begin{rem}\label{rem:control_extension}
    The author does not know how to control the size of the field extensions over which the parametrization is computed.
    There might exist an approach to this specific instance of the parametrization problem 
    which would always compute a parametrization over the smallest possible field, but the author is not aware of it.

    In practice, one should try as much as possible to use covariants of small orders to reduce the degrees of the fields extensions.
    
    The most favorable case is when the covariants are of degree $1$. Then $\tilde{f}$ belongs to $\Sym^{kd}(W^*)$, and is directly 
    $\mathrm{GL}_{n+1}$-equivalent to $f$. In addition, the coefficients of $\tilde{f}$ lie in the field where the invariants of $f$ live. 

    Unfortunately sometimes covariants of order $1$ do not exist. For instance, binary forms of even degree only have covariants of degree $2$.
    The image of $\varphi$ is always a conic, and can be parametrized if it has a rational point. Otherwise, the parametrization is defined over a quadratic extension of the field in which the invariants lie.

    For other higher degrees or greater number of variables, there exist several quadratic relations, and the algorithm can be quite impractical.
\end{rem}

\subsection{Main algorithm}

We derive a reconstruction algorithm from Corollary~\ref{cor:general_reconstruction}.

\begin{cor}\label{cor:general}
    Let $k$, $d$, and $n$ be positive integers. Let $K$ be an algebraically closed field of characteristic $0$ or $p > kd$. Let $W$ be a $K$-vector space with basis $w_0,\ldots, w_n$ and dual basis $x_0,\ldots, x_n$, and let $r = \dim_K(\Sym^d(W^*))-1$.
    We assume that there exist $q_0,\ldots, q_r$ covariants of order $d$ which
    are generically linearly independent. 
    Let $(I_j)_{j\in J}$ be a (finite) set of generators of $K[\Sym^{kd}(W^*)]^{\mathrm{SL_{n+1}}}$.
    
    There exists an algorithm, which, given $(I_j(f))_{j\in J}$ corresponding to $f\in\Sym^{kd}(W^*)$ such that $q_0,\ldots,q_r$ are linearly independent at $f$, returns a form $f'\in\Sym^{kd}(W^*)$ with the same invariants as $f$.
\end{cor}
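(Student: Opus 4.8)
The plan is to assemble the algorithm directly from the machinery developed in the preceding sections, showing that each step is effective given only the specialized invariants $(I_j(f))_{j\in J}$. First I would observe that the hypotheses of Corollary~\ref{cor:general} are precisely those of Theorem~\ref{thm:general}: we are given generically linearly independent covariants $q_0,\ldots,q_r$ of order $d$, and the input $f$ is chosen so that they are linearly independent at $f$. By Lemma~\ref{lem:dual_basis}, the determinant $\Delta$ of the change of basis matrix is a nonzero invariant, and $\Delta q_0^*,\ldots,\Delta q_r^*$ are contravariants. The central point is that every quantity appearing in Theorem~\ref{thm:general} — namely the coefficients $\D(\Delta q_{i_1}^*\cdots\Delta q_{i_k}^*,\Id)(f)$ of $\tilde f$, and the entries $\big(\D(\Delta q_i^*\Delta q_j^*,q_lq_m)(f)\big)$ of the matrix whose kernel yields the quadratic relations $Q_0,\ldots,Q_s$ — is an invariant of $\Sym^{kd}(W^*)$. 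Since $\{I_j\}_{j\in J}$ generates the full invariant algebra, each such invariant is a polynomial in the $I_j$, so its value at $f$ can be computed from the given data $(I_j(f))_{j\in J}$ alone.

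The next step is to turn this data into an explicit form. I would first express every relevant invariant symbolically as a polynomial in the generators $I_j$ (a one-time precomputation independent of $f$), then evaluate at the known values $(I_j(f))_{j\in J}$ to obtain $\tilde f$ and the matrix $Q_{q(f)^*,q(f)}$. Applying the parametrization procedure of Section~\ref{sec:param}, I would solve Equation~\eqref{eq:good_basis_pullback} for a matrix $M\in\mathrm{GL}_{r+1}$ transforming $Q_{q(f)^*,q(f)}$ into the canonical matrix $Q_{b^*,b}$, via the Noether normalization described there. With such an $M$ in hand, setting $g := \tilde f({}^tMX)$ and substituting the canonical Veronese monomials $b = (x_0^d,\ldots,x_n^d)$ produces $g(b)\in\Sym^{kd}(W^*)$, which by Proposition~\ref{prop:recover} (equivalently, the final assertion of Theorem~\ref{thm:general}) is $\mathrm{GL}_{n+1}$-equivalent to $f$. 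Finally, I would verify that the output $f' := g(b)$ genuinely has the prescribed invariants: since $\mathrm{GL}_{n+1}$-equivalence preserves $\mathrm{SL}_{n+1}$-invariants up to the determinant scaling recorded in the remark after Definition~\ref{def:cov}, a normalization (or the observation that the $I_j$ are genuine invariants on the relevant orbit) gives $I_j(f') = I_j(f)$ for all $j$.

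The main obstacle is the parametrization step, which is exactly the non-constructive part flagged in the remark following Proposition~\ref{prop:recover}. Solving Equation~\eqref{eq:good_basis_pullback} is a system of $(r+1)^4$ quadratic equations in $(r+1)^2$ unknowns, and while Section~\ref{sec:param} reduces it to a Noether normalization from which a solution can be read off after assigning arbitrary values to the algebraically independent coordinates $(m_l)_{l\in L}$, the solution $M$ generally lives in a finite extension of the field containing the invariants. This is why the statement only guarantees recovery of a form $\mathrm{GL}_{n+1}$-equivalent to $f$, and why $K$ is taken algebraically closed. Controlling the degree of this extension is genuinely delicate (see Remark~\ref{rem:control_extension}), but for the existence claim of Corollary~\ref{cor:general} it suffices that a solution exists — which it does, since the true change of basis from $q(f)$ to $b$ is one — and that it can be computed effectively, which the Noether normalization guarantees.
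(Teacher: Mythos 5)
Your proposal is correct and follows essentially the same route as the paper: derive everything from Theorem~\ref{thm:general}, precompute the needed invariants as polynomials in the generators $(I_j)_{j\in J}$, specialize at the given values, and then apply the parametrization procedure of Section~\ref{sec:param} to produce $f'$. Your closing remark on why $I_j(f')=I_j(f)$ despite only having $\mathrm{GL}_{n+1}$-equivalence is a point the paper leaves implicit, but it does not change the argument.
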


We explain how to derive such an algorithm from Theorem~\ref{thm:general}.
Here is a high-level description of this algorithm:

\begin{enumerate}
    \item The first step (which is done only once for every set of covariants) consists in the precalculation of a decomposition on a generating set of invariants
    of all the invariants required for the computation of $\tilde{f}$, of the matrix $\big(\D(\Delta q_i^*\Delta q_j^*\,, q_lq_m)\big)$,
    and of the invariant $\det(q_0,\ldots,q_r)$. That step can be done using an evaluation-interpolation strategy.

    \item We can then specialize these formulas to an $f$ represented by a list of invariants $(I_j(f))_{j\in J}$ by evaluating the decomposition polynomials at the
    values of the generating set of invariants at $f$. We check the condition of linear independance at $f$ by specializing the invariant $\det(q_0,\ldots,q_r)$ at $f$.

    \item If the determinant is not $0$, we can parametrize $\mathrm{Im}(\varphi)$ from the quadratic forms by using Section~\ref{sec:param}. 
    Then, we evaluate $\tilde{f}$ on this parametrization, and we recover a form $f'\in \Sym^{kd}(W^*)$ which is $\mathrm{GL}_{n+1}$-equivalent to $f$.
\end{enumerate}

\subsection{Improving the algorithm}
\mbox{}

In order to be able to use this algorithm in practice, we need to reduce the degrees of the invariants used.
Indeed, the degrees of the invariants involved in the formulas in Theorem~\ref{thm:general} can be high, rendering our technique very much not effective.

For instance, if we choose $q_1,\ldots,q_r$ generically linearly independent covariants of
$\Sym^{kd}(W^*)$ of respective degrees $d_1,\ldots ,d_r$,
their dual basis $\Delta q_1^*,\ldots,\Delta q_r^*$ of contravariants has high degree: $\deg(\Delta q_i^*) = d_i+2\sum_{j\neq i}d_j$.
Thus the coefficients of $\tilde{f}$ are invariants of very high degrees,
hence they are not effectively decomposable on a generating set of invariants.

To remedy this problem, we can choose a set of covariants $q = (q_i)$ and a set of contravariants $p = (p_i)$ of the same order,
which generically form a basis of their respective spaces.
Then, computing a dual basis $p(f)^*$ of $p(f)$ in terms of $q(f)$ (or the reverse) is just a task of linear algebra, as established in the following lemma.

\begin{lemma}
    Let $M_{p,q} = \left(\D(p_i,q_j)\right)_{i,j}$.
    Then the basis $(p_i(f)^*)_i$ can be expressed using the basis $(q_j(f))_j$. We have in particular
    \[(p_i(f)^*)_i = M_{p,q}^{-1}(q_j(f))_j.\]
    
    Hence, if we let \[\tilde{f}_{p,q} = \sum_{0\leq i_1,\ldots, i_k \leq r}\D(p_{i_1}\cdots p_{i_k}, f)X_{i_1}\cdots X_{i_k},\]
    and \[Q_{p,q} = \big(\D(p_ip_j\,,
    q_lq_m)(f)\big)_{\substack{0\leq i,j\leq r\\0\leq l, m\leq r}},\]
    we have \[\tilde{f}_{p,q}(p_0(f)^*,\ldots, p_r(f)^*) = \frac{(kd)!}{d!^k}f,\]
    and the quadratic relations between the $p_i(f)^*$ can be known by computing a basis of 
    the right kernel of $Q_{p,q}{}^tM_{p,q}^{-1}$.
\end{lemma}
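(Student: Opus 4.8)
The plan is to establish the three assertions in turn, each reducing to tools already available: Lemma~\ref{lem:mat_inv}, the non-degeneracy of the apolarity pairing $\D$, and the Taylor-like identity of Corollary~\ref{cor:decompo_f}. Throughout I work at an $f$ for which $(p_i(f))_i$ is a basis of $\Sym^d(W)$ and $(q_j(f))_j$ a basis of $\Sym^d(W^*)$; by Lemma~\ref{lem:mat_inv} this is exactly the condition that $M_{p,q} = (\D(p_i(f),q_j(f)))_{i,j}$ be invertible, which I use freely below.

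First I would express the dual basis $(p_i(f)^*)_i$, which lies in $\Sym^d(W^*)$, in the basis $(q_j(f))_j$ of that same space, writing $p_i(f)^* = \sum_j c_{ij}\,q_j(f)$. The defining relation $\D(p_i(f)^*, p_l(f)) = \delta_{i,l}$, combined with the symmetry of the scalar apolarity pairing $\D(q_j(f),p_l(f)) = \D(p_l(f),q_j(f)) = (M_{p,q})_{l,j}$, turns this into a linear system identifying the coefficient matrix $(c_{ij})$, up to the transpose fixed by the index conventions, with $M_{p,q}^{-1}$; this is the asserted formula $(p_i(f)^*)_i = M_{p,q}^{-1}(q_j(f))_j$. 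This is the only place invertibility of $M_{p,q}$ is needed.

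For the evaluation identity I would first note that, by the same symmetry of $\D$, the relation $\D(p_i(f)^*, p_l(f)) = \delta_{i,l}$ says precisely that $(p_i(f))_i$ is the dual basis of $(p_i(f)^*)_i$. I then apply Corollary~\ref{cor:decompo_f} to the basis $(p_i(f)^*)_i$ of $\Sym^d(W^*)$ and its dual basis $(p_i(f))_i$: its right-hand side is exactly $\sum_{i_1,\ldots,i_k}\D(p_{i_1}(f)\cdots p_{i_k}(f), f)\,p_{i_1}(f)^*\cdots p_{i_k}(f)^*$, i.e. $\tilde{f}_{p,q}$ evaluated at $(p_0(f)^*,\ldots,p_r(f)^*)$, while its left-hand side is $\tfrac{(kd)!}{d!^k}f$. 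The claimed identity follows with no further work.

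Finally, for the quadratic relations I would characterize a relation $\sum_{l,m} a_{lm} X_l X_m$ among the $p_i(f)^*$ as the vanishing of $\sum_{l,m} a_{lm}\,p_l(f)^* p_m(f)^*$ in $\Sym^{2d}(W^*)$. Since $(p_i(f))_i$ is a basis of $\Sym^d(W)$, its pairwise products $(p_i(f)p_j(f))_{i,j}$ span $\Sym^{2d}(W)$, and as $\D$ is non-degenerate this vanishing is equivalent to $\D(p_i(f)p_j(f), \sum_{l,m} a_{lm} p_l(f)^* p_m(f)^*) = 0$ for all $i,j$; hence the relations form the right kernel of $\big(\D(p_ip_j, p_l^*p_m^*)(f)\big)_{(i,j),(l,m)}$. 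Substituting $p_l(f)^* = \sum_a (M_{p,q}^{-1})_{la} q_a(f)$ from the first step factors this matrix as $Q_{p,q}$ right-multiplied by the base change ${}^tM_{p,q}^{-1}$ acting on each of the two tensor slots, that is by ${}^tM_{p,q}^{-1}\otimes{}^tM_{p,q}^{-1}$; this factor being invertible, the right kernel coincides with that of $Q_{p,q}\,{}^tM_{p,q}^{-1}$, as claimed. I expect the main obstacle to be exactly this last bookkeeping: keeping the transposes and the tensor square of the base change straight, and confirming the spanning-plus-non-degeneracy step that turns the vanishing of a $2d$-form into a kernel condition.
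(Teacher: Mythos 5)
Your proof is correct and is precisely the argument the paper intends but omits (the lemma is stated without proof, introduced only as ``just a task of linear algebra''): the change of coordinates via $M_{p,q}$ from Lemma~\ref{lem:mat_inv}, Corollary~\ref{cor:decompo_f} applied to the basis $(p_i(f)^*)_i$ with dual basis $(p_i(f))_i$, and the spanning-plus-non-degeneracy step for the kernel characterization, which is exactly the mechanism used in the proof of Theorem~\ref{thm:general}. The transpose/Kronecker-square bookkeeping you flag is ambiguous in the paper's own statement of the lemma, so your hedging there is appropriate rather than a gap.
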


To summarize, here is what need to be precomputed:
\begin{enumerate}
    \item The decomposition of $\D(p_i, q_j)$ for all $0\leq i,j\leq r$ for the computation of the dual basis of $p$ by inverting $M_{p,q}$.
    \item The decomposition of $\D(p_ip_j, q_lq_m)$ for all $0\leq i,j,l,m\leq r$ for the computation of the quadratic forms.
    \item The decomposition of $\D(p_{i_1}\cdots p_{i_k}, f)$ for all $0\leq i_1,\ldots,i_k\leq r$ for the computation of $\tilde{f}$.
\end{enumerate}
It consists in a total of $(r+1)^k+(r+1)^4+(r+1)^2$ invariants, of degrees at most $\max(d_p^2d_q^2\,, d_p^k+1)$,
where $d_p$ (resp. $d_q$) is the maximal degree of the contravariants (resp. covariants).
\medskip

\begin{rem}\label{rem:order1}
    When $d = 1$, we have $r=n$, so the morphism $\varphi$ introduced
    in Section~\ref{sec:theory} is just an automorphism of
    $\mathbb{P}^n$.
    Hence, by choosing $n+1$ contravariants of order $d$ which are linearly independent at $f$, we obtain
    \[\tilde{f} = \sum_{0\leq i_1,\ldots, i_k\leq r}\D(p_{i_1}(f)\cdots p_{i_k}(f), f)X_{i_1}\cdots X_{i_k}\,.\]
    Since \[\tilde{f}(p_0(f)^*,\ldots, p_r(f)^*) = \frac{(kd)!}{d!^k}f\,,\] it is clear that $\tilde{f}$ and $f$ are $\mathrm{GL}_{n+1}$-equivalent.
    Moreover, the field over which $\tilde{f}$ is defined is the field in which the invariants lie.
\end{rem}

Building on that remark, we provide a new reconstruction algorithm for smooth plane quartics in Section~\ref{sec:reconstruction_genus3}.

\begin{rem}
    We note that in the opposite case $k = 1$, the situation is significantly worse: eventhough the form $f$ itself is a covariant, the number of 
    invariants needed explodes, and the step of parametrization of $\mathrm{Im}(\varphi)$ becomes unmanageable. 
    As a result, in practice, we use $d=1,2$ whenever possible.
\end{rem}

\section{Reconstruction of smooth hypersurfaces}
\label{sec:smooth}

Let $K$ be an algebraically closed field of characteristic $0$.
Let $k$, $d$, and $n$ be positive integers. Let $W$ be a $(n+1)$-dimensional $K$-vector space, and let $W^*$ denote its dual.
In this paragraph, we show that under mild assumptions on $f\in\Sym^{kd}(W^*)$,
there exist $\dim_K(\Sym^d(W^*))$ covariants of $\Sym^{kd}(W^*)$ which are linearly independent at $f$.
We use the notion of stability defined in Mumford's GIT~\citep{mumford}. One can find an exposure that suits our needs in~\cite[Chapters 8,9]{dolgachev}.
\medskip

Let us recall an important result.

\begin{prop}[\protect{\cite[Prop 3.1]{domokos}}]\label{prop:domokos}
    Let $G$ be a linearly reductive group, $X$ an affine $G$-variety, and $W$ a $G$-module.
    If for some $x\in X$ having closed orbit the stabilizer $G_x$ acts trivially on $W$,
    then there exist $s = \dim_K(W)$ covariants $F_1, \ldots, F_s\in \mathrm{Cov}_G(X, W)$ such that $F_1(x), \ldots, F_s(x)$ are linearly independent over $K$.
\end{prop}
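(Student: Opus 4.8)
The plan is to read covariants as $G$-equivariant morphisms and to recast the whole statement as the surjectivity of evaluation at $x$. Recall that $\mathrm{Cov}_G(X,W)$ is precisely the space of $G$-equivariant morphisms $F : X \to W$, canonically identified with the invariant subspace $(\mathcal{O}(X)\otimes_K W)^G$, where $G$ acts diagonally on the coordinate ring $\mathcal{O}(X)$ and on $W$. Under this identification the evaluation $\mathrm{ev}_x : \mathrm{Cov}_G(X,W)\to W$, $F\mapsto F(x)$, is $K$-linear, and producing $s=\dim_K(W)$ covariants whose values at $x$ are linearly independent is the same as finding a basis of $W$ inside the image of $\mathrm{ev}_x$. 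So the entire task reduces to showing that $\mathrm{ev}_x$ is surjective.

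First I would reduce from $X$ to the orbit $O := G\cdot x$. Since $O$ is closed and $X$ is affine, $O$ is a closed affine $G$-subvariety, so the restriction homomorphism $\mathcal{O}(X)\to\mathcal{O}(O)$ is a surjective morphism of $G$-modules. Tensoring with $W$ over $K$ preserves surjectivity, and because $G$ is linearly reductive the invariants functor $(-)^G$ is exact, hence preserves surjections; therefore the induced restriction map $\mathrm{Cov}_G(X,W)\to\mathrm{Cov}_G(O,W)$ is again surjective. As restriction commutes with evaluation at $x$ (because $x\in O$ and $(F|_O)(x)=F(x)$), it suffices to prove surjectivity of $\mathrm{ev}_x$ on $\mathrm{Cov}_G(O,W)$.

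On the orbit I would use the isomorphism $O\cong G/H$ with $H:=G_x$, chosen so that $x$ corresponds to the base point $eH$, and prove that $\mathrm{ev}_x:\mathrm{Cov}_G(G/H,W)\to W^H$ is an isomorphism. For the forward direction, any equivariant $F$ and any $h\in H$ give $F(x)=F(h\cdot x)=h\cdot F(x)$, so $F(x)\in W^H$. Conversely, any $w\in W^H$ defines a well-defined map $gH\mapsto g\cdot w$ (well-definedness uses $w\in W^H$), which is a morphism because $g\mapsto g\cdot w$ is regular on $G$ and descends along the quotient $G\to G/H$, and whose value at $x$ is $w$; these two constructions are mutually inverse. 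Now the hypothesis that $H=G_x$ acts trivially on $W$ gives $W^H=W$, so $\mathrm{ev}_x$ is onto $W$ on the orbit, hence onto $W$ on $X$ by the previous step. Choosing $s=\dim_K(W)$ preimages of a basis of $W$ produces covariants $F_1,\dots,F_s$ with $F_1(x),\dots,F_s(x)$ linearly independent.

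The step I expect to be the crux is the reduction to the orbit, since it is exactly there that the two hypotheses are used: closedness of $G\cdot x$ furnishes the surjection $\mathcal{O}(X)\twoheadrightarrow\mathcal{O}(O)$, and linear reductivity guarantees this surjectivity survives the passage to $G$-invariants. Without reductivity the invariants functor need not be exact and the argument collapses; without a closed orbit the restriction of functions need not be surjective. The homogeneous-space identification is then formal, the only mild technical point being the descent of $g\mapsto g\cdot w$ to a regular map on $G/H$.
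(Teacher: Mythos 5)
The paper does not prove this proposition; it is imported verbatim from Domokos \cite[Prop.\ 3.1]{domokos}, so there is no in-paper argument to compare against. Your proof is correct and is essentially the standard one (and, as far as I can tell, the same as Domokos's): recast the claim as surjectivity of $\mathrm{ev}_x:(\mathcal{O}(X)\otimes_K W)^G\to W$, use closedness of the orbit plus exactness of $(-)^G$ for linearly reductive groups to reduce to the orbit, and identify $\mathrm{Cov}_G(G/G_x,W)\cong W^{G_x}=W$ by a Frobenius-reciprocity-type argument. The only point you rightly flag as mildly technical --- that $g\mapsto g\cdot w$ descends to a regular map on $G/G_x\cong G\cdot x$ --- is unproblematic in characteristic $0$, which is the only setting in which the paper invokes the proposition (it even remarks that $\mathrm{SL}_{n+1}$ and $\mathrm{GL}_{n+1}$ fail to be linearly reductive in positive characteristic).
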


\begin{rem}
    The linearly reductive condition on $G$ implies that we work over a field of characteristic $0$, since $\mathrm{GL}_{n+1}$ and $\mathrm{SL}_{n+1}$ 
    are not linearly reductive in positive characteristic~\cite[Theorem 2.2.19]{derksen}. The authors wonders whether the linear reductivity condition can be weakened to work in positive characteristic as well,
    for example by using the theory of good filtrations~\citep{jantzen,derksen_weyl}.
\end{rem}

\begin{prop}\label{prop:covs}
    For every stable $f\in \Sym^{kd}(W^*)$ with trivial reduced stabilizer, the following statements are equivalent:
    \begin{enumerate}
        \item There exist $q_0,\ldots, q_r$ covariants of order $d$ which are linearly independent at $f$,
        \item $\displaystyle\gcd\left (k, \frac{(n+1)}{\gcd(n+1, d)}\right ) = 1$. 
    \end{enumerate}
\end{prop}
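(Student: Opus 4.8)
The plan is to reduce both conditions to a single arithmetic criterion on the way the stabilizer acts on $\Sym^d(W^*)$, and then invoke Proposition~\ref{prop:domokos}. Write $\zeta = \gcd(kd, n+1)$. Since $f$ is stable with trivial reduced stabilizer, $G_f = U_{kd}$, so (working inside $\mathrm{SL}_{n+1}$) the stabilizer consists exactly of the scalar matrices $\mu I_{n+1}$ with $\mu^{kd} = 1$ and $\mu^{n+1} = 1$, that is with $\mu^{\zeta} = 1$. The first thing I would record is how such a scalar acts on the module $\Sym^d(W^*)$: under the contragredient action $\mu I_{n+1}$ sends each $x_i$ to $\mu^{-1} x_i$, hence scales every element of $\Sym^d(W^*)$ by $\mu^{-d}$. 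Consequently $G_f$ acts trivially on $\Sym^d(W^*)$ if and only if $\mu^d = 1$ for every $\zeta$-th root of unity $\mu$, i.e. if and only if $\zeta \mid d$.

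Next I would check that this divisibility is exactly condition (2). Since $\zeta \mid n+1$ always, the relation $\zeta \mid d$ is equivalent to $\zeta \mid \gcd(n+1, d)$. Setting $e = \gcd(n+1, d)$ and $m = (n+1)/e$, and writing $d = e d'$ with $\gcd(d', m) = 1$, a short computation gives
\[
\gcd(kd, n+1) = e \cdot \gcd(k d', m) = e \cdot \gcd(k, m),
\]
the last equality because $\gcd(d', m) = 1$. As $e \mid \zeta$ always holds, the condition $\zeta \mid d$ becomes $\zeta = e$, i.e. $\gcd(k, m) = 1$, which is precisely $\gcd\left(k, (n+1)/\gcd(n+1, d)\right) = 1$. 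This establishes the equivalence of (2) with the triviality of the $G_f$-action on $\Sym^d(W^*)$.

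For the implication (2) $\Rightarrow$ (1), I would apply Proposition~\ref{prop:domokos} with $G = \mathrm{SL}_{n+1}$ (linearly reductive in characteristic $0$), $X = \Sym^{kd}(W^*)$, the module $W' = \Sym^d(W^*)$, and the point $x = f$. The orbit of $f$ is closed because $f$ is stable, and by the previous paragraphs $G_f$ acts trivially on $\Sym^d(W^*)$ under (2); thus Domokos yields $s = \dim_K(\Sym^d(W^*)) = r+1$ equivariant maps $X \to \Sym^d(W^*)$ whose values at $f$ are linearly independent. These are covariants of order $d$; the only minor technical point is that Domokos's maps need not be homogeneous, so I would decompose each into its homogeneous components and select $r+1$ homogeneous pieces whose values at $f$ remain linearly independent, which is possible since together they still span $\Sym^d(W^*)$.

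For the contrapositive of (1) $\Rightarrow$ (2), assume (2) fails, so $\zeta \nmid d$ and we may choose a primitive $\zeta$-th root of unity $\mu$, for which $\mu^d \neq 1$. Then $A = \mu I_{n+1}$ lies in $G_f \cap \mathrm{SL}_{n+1}$ (indeed $\mu^{n+1} = \mu^{kd} = 1$), so for any covariant $C$ of order $d$, equivariance gives $C(f) = C(A\cdot f) = A\cdot C(f) = \mu^{-d} C(f)$, forcing $C(f) = 0$. Hence every order-$d$ covariant vanishes at $f$, and since $r + 1 \geq 1$ no family of them can be linearly independent at $f$, so (1) fails. The main obstacle in the whole argument is not conceptual but bookkeeping: correctly pinning down the character $\mu \mapsto \mu^{-d}$ of the stabilizer action and carrying out the number-theoretic reduction to condition (2); once the triviality-of-action criterion is in place, Proposition~\ref{prop:domokos} supplies one direction and the character computation the other.
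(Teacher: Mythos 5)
Your proof is correct and follows essentially the same route as the paper: reduce triviality of the $G_f$-action on $\Sym^d(W^*)$ to the arithmetic condition $\gcd(kd,n+1)=\gcd(d,n+1)$ and then invoke Proposition~\ref{prop:domokos} for the existence direction. The only (minor) divergence is in $(1)\Rightarrow(2)$, where the paper argues via integrality of the weight that no order-$d$ covariants exist at all, while you show more directly that the scalar $\mu I_{n+1}\in G_f$ with $\mu^d\neq 1$ forces every order-$d$ covariant to vanish at $f$; both are valid.
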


\begin{proof}
    Since the existence of covariants of order $d$ of $\Sym^{kd}(W^*)$ implies the second statement, we only need to prove the converse.

    We apply Proposition~\ref{prop:domokos} with $G = \mathrm{SL_{n+1}}$, $X=\Sym^{kd}(W^*)$, and $W = \Sym^d(W^*)$.
    Since $\mathrm{char}(K) = 0$, we know that $G$ is linearly reductive~\cite{haboush}.

    It is known that the elements of the stable locus have closed orbit~\cite[Chapter 8]{dolgachev}.
    Now, if let $f\in X$ with trivial reduced stabilizer, we have \[G_f = \{\lambda \Id~\vert~\lambda^{kd}=\lambda^{n+1} = 1\}.\]

    We derive from this equality that 
    $G_f$ acts trivially on $W = \Sym^d(W^*)$ if and only if for all $\lambda\in G_f$, we have $\lambda^d = 1$.
    This is the case if and only if $\gcd(kd, n+1) = \gcd(d, n+1)$, and this condition can be rewritten as \[\gcd\left (k, \displaystyle\frac{n+1}{\gcd(d, n+1)}\right ) = 1\,.\]

    In addition, we have \[\frac{kd\alpha-d}{n+1} = \frac{d}{\gcd(d ,n+1)}\cdot \frac{k\alpha -1}{\displaystyle\frac{n+1}{\gcd(d, n+1)}}\,.\]
    In other words, for any order $d$ for which covariants of $\Sym^{kd}(W^*)$ might exist (meaning for which the weight $\frac{kd\alpha-d}{n+1}$ is an integer),
    there must exist at least $\dim_K(\Sym^{d}(W^*))$ generically linearly independent covariants of $\Sym^{kd}(W^*)$.

    In that case, we apply Proposition~\ref{prop:domokos}, which implies the existence of the desired covariants.
\end{proof}

\begin{cor}\label{cor:smooth_hyp}
    Let us assume that $kd\geq 3$, and that $\displaystyle\gcd\left (k, \displaystyle\frac{(n+1)}{\gcd(n+1, d)}\right ) = 1$. 
    For any form $f\in \Sym^{kd}(W^*)$ such that which defines a smooth hypersurface with trivial automorphism group, the reconstruction algorithm (Corollary~\ref{cor:general}) applies.
\end{cor}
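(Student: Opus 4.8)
The plan is to verify that the two hypotheses of Corollary~\ref{cor:general} hold for such an $f$: that there exist $r+1$ generically linearly independent covariants of order $d$ of $\Sym^{kd}(W^*)$, and that these are moreover linearly independent at $f$. Both will be delivered by Proposition~\ref{prop:covs}, so the real work is to check that $f$ is stable with trivial reduced stabilizer, after which the gcd assumption does the rest.

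First I would recall that, since $f$ defines a smooth hypersurface of degree $kd\geq 3$ in $\mathbb{P}^n$ (with $n\geq 2$), the form $f$ is stable, as noted in the introduction following~\cite[Chapter 8]{dolgachev}. This provides the stability hypothesis of Proposition~\ref{prop:covs} and in particular guarantees that the orbit of $f$ is closed.

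Next I would identify the reduced stabilizer with a subgroup of the automorphism group of the hypersurface $X = V(f)$. Any $M\in G_f$ fixes $f$, hence preserves $X$, so its image in $\mathrm{PGL}_{n+1}$ restricts to an automorphism of $X$; this defines a homomorphism $G_f\to\mathrm{Aut}(X)$. Because $X$ is an irreducible, nondegenerate hypersurface, an element of $\mathrm{PGL}_{n+1}$ fixing $X$ pointwise must fix all of $\mathbb{P}^n$, so it is scalar. Thus the kernel of $G_f\to\mathrm{Aut}(X)$ is exactly the subgroup $U_{kd}\subseteq G_f$ of scalar matrices fixing $f$, yielding an injection $G_f/U_{kd}\hookrightarrow\mathrm{Aut}(X)$. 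Consequently, if $X$ has trivial automorphism group, then the reduced stabilizer of $f$ is trivial. Note that only this direction is needed, so we do not require the Matsumura--Monsky theorem on the linearity of automorphisms of hypersurfaces: we merely use that linear automorphisms are automorphisms.

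Finally, with $f$ stable and of trivial reduced stabilizer, and under the standing assumption $\gcd\!\left(k,(n+1)/\gcd(n+1,d)\right)=1$, Proposition~\ref{prop:covs} supplies covariants $q_0,\ldots,q_r$ of order $d$ that are linearly independent at $f$; by the remark following the definition of generic linear independence, they are then generically linearly independent. Both requirements of Corollary~\ref{cor:general} are therefore met, and the reconstruction algorithm applies. The step demanding the most care is the group-theoretic identification of the preceding paragraph — in particular confirming that the kernel of $G_f\to\mathrm{Aut}(X)$ is precisely $U_{kd}$, which rests on the nondegeneracy of $X$ — while the invocation of stability and the appeal to Propositions~\ref{prop:covs} and Corollary~\ref{cor:general} are routine.
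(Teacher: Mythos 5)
Your proposal is correct and follows essentially the same route as the paper: smoothness gives stability (via Dolgachev), and then Proposition~\ref{prop:covs} together with the gcd hypothesis supplies the covariants needed for Corollary~\ref{cor:general}. The only difference is that you spell out the step the paper leaves implicit --- that the kernel of $G_f\to\mathrm{Aut}(X)$ consists exactly of the scalar matrices $U_{kd}$, so trivial $\mathrm{Aut}(X)$ forces trivial reduced stabilizer --- and your argument for that step (nondegeneracy of $X$ forcing a pointwise-fixing linear map to be scalar) is sound.
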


\begin{proof}
    According to~\cite[Theorem 10.1]{dolgachev}, any non-singular element of $\Sym^{kd}(W^*)$ is stable.
    Hence, by Proposition~\ref{prop:covs}, there exist covariants that can be used to meet the requirements of Corollary~\ref{cor:general}.
\end{proof}

\begin{rem}\label{rem:generating_cov}
    Let $C_d$ be the $K[\Sym^{kd}(W^*)]^{\mathrm{SL}_{n+1}}$-module of covariants of order $d$.
    Since the space of covariants is finitely generated, so is $C_d$. Let $q_0,\ldots, q_l$ be a generating family of $C_d$.
    Now let $f\in\Sym^{kd}(W^*)$ be stable such that $f$ has trivial reduced stabilizer.
    Then there exist $r = \dim_K(\Sym^d(W^*))$ covariants of order $d$ which are linearly independent at $f$.
    Hence, there must be $r$ covariants in the set $q_0,\ldots, q_l$ which are linearly independent at $f$.

    Therefore, if we have at our disposal a generating set of covariants of a given degree,
    then we know that any stable $f\in\Sym^{kd}(W^*)$ such that $f$ has trivial reduced stabilizer can be covariantly reconstructed by using only
    a subset of these generating covariants.
    Unfortunately, determining such a set of covariants is usually out of reach.
\end{rem}

\section{Examples}
\label{sec:examples}

\subsection{Binary forms}
\mbox{}

We turn to the case of binary forms, for which reconstruction algorithms have been found by~\cite{mestre} and~\cite{noordsij}.
Let $W$ be a $2$-dimensional $K$-vector space with basis $w_0,w_1$ and dual basis $x_0,x_1$. They used bases of covariants of oreder $2$ and $1$ respectively,
which enabled them to solve the reconstruction problem for binary forms of even degrees and odd degrees respectively.

\begin{definition}
    Let $f\in \Sym^d(W^*), g\in \Sym^e(W^*)$ for some positive integers $d$ and $e$.
    For any positive integer $l$, we define the \textit{transvectant} of level $l$ of $f$ and $g$ to be
    \[(f,g)_l = \sum_{i = 0}^l(-1)^i\binom{l}{i}\frac{\partial^l f}{\partial^ix_0\partial^{l-i}x_1}\frac{\partial^l g}{\partial^{l-i}x_0\partial^ix_1}\in Sym^{d+e-2l}(W^*).\]
\end{definition}

\begin{prop}
    Let $r$ be a positive integer, and let $K$ be an algebraically closed field of characteristic $0$ or $p> r$. If we define the linear function 
    \[\function{\tau_r}{\mathrm{Sym}^r(W^*)}{\mathrm{Sym}^r(W)}{x_0^ix_1^j}{r!(-1)^iw_0^jw_1^i},\]
    then for all $C\in\Sym^r(W^*), C'\in\Sym^{r'}(W^*)$, we have:
        \[\D(\tau_r(C), C') = (C, C')_{r}.\]
\end{prop}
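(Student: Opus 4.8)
The plan is to reduce to monomials by linearity, then compute both sides explicitly and match them. Both $C\mapsto\D(\tau_r(C), C')$ and $C\mapsto (C, C')_r$ are linear in $C$ ($\tau_r$ is linear by definition, $\D$ is bilinear, and the transvectant is linear in its first slot), so it suffices to verify the identity when $C = x_0^i x_1^j$ is a monomial with $i+j=r$. This is where I would do all the work.

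First I would evaluate the left-hand side. For the monomial $C=x_0^ix_1^j$ we have $\tau_r(C) = r!(-1)^i w_0^j w_1^i$, so by Definition~\ref{def:D},
\[\D(\tau_r(C), C') = r!(-1)^i \frac{\partial^r C'}{\partial x_0^{j}\partial x_1^{i}}\,.\]
Next I would expand $(C,C')_r$ and exploit the fact that $C$ has degree exactly $r$: each summand carries the factor $\frac{\partial^r C}{\partial x_0^m\partial x_1^{r-m}}$, an $r$-th order derivative of a degree-$r$ monomial, hence a constant. The point requiring attention is that this factor vanishes unless $m=i$, since a nonzero value forces simultaneously $m\leq i$ and $r-m\leq j=r-i$, i.e. $m=i$. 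Therefore only the term $m=i$ survives, with $\frac{\partial^r C}{\partial x_0^{i}\partial x_1^{j}} = i!\,j!$, giving
\[(C, C')_r = (-1)^i\binom{r}{i} i!\,j!\,\frac{\partial^r C'}{\partial x_0^{j}\partial x_1^{i}}\,.\]
The two expressions then coincide upon applying the elementary identity $\binom{r}{i}i!\,j! = r!$, valid because $i+j=r$; in particular the sign $(-1)^m$ specializes to exactly the $(-1)^i$ built into $\tau_r$.

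There is no genuine obstacle here beyond careful index bookkeeping: the only subtlety is confirming that the constraint $i+j=r$ collapses the transvectant sum to a single term and that the alternating sign matches the sign in the definition of $\tau_r$. The hypothesis $\mathrm{char}(K)=0$ or $p>r$ ensures all the factorials appearing up to $r!$ are well-behaved, so every differentiation and coefficient identity above is valid.
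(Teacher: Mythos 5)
Your proof is correct and complete: the reduction to monomials by linearity is legitimate, the collapse of the transvectant sum to the single term $m=i$ (forced by $m\leq i$ and $r-m\leq j=r-i$) is the right observation, and the final identity $\binom{r}{i}i!\,j!=r!$ closes the argument. The paper states this proposition without proof, and the direct computation you give is exactly the verification one would supply; nothing further is needed.
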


\begin{rem}
    We note that if $C$ is a covariant of $\Sym^d(W^*)$ of order $r$, then $\tau_r(C)$ is a contravariant of the same space.
    Thus for binary forms, the notions of covariants and contravariants are essentially the same notion, and we typically speak only in terms of covariants.

    However, the theory of covariants and contravariants is distinct for polynomials in more than $2$ variables, thus the map $\tau_r$ does not generalize.
\end{rem}

A similar statement holds for its inverse map $\tau_r^{-1}$, which maps contravariants to covariants.
These functions make the connection between the transvectant operator for binary forms and the operator $\D$.
Hence, if $(q_i)_i$ is a family of covariants of order $d$ of $\Sym^{kd}(W^*)$ which are generically linearly independent,
then $(p_i := \tau(q_i))_i$ is a family of contravariants of order $d$ of $\Sym^{kd}(W^*)$ which are generically linearly independent.
Thus, one can use the families $p_i$ and $q_j$ to reconstruct a generic element of $\Sym^{kd}(W^*)$.

\begin{lemma}
    We assume that $K$ is of characteristic $0$ or $p > kd$, and let $q_1,\ldots, q_k\in \mathrm{Sym}^d(W^*)$.
    Then we have 

    \begin{equation}\label{eq:multiplicativity_tau}
        \tau_d(q_1)\cdots \tau_d(q_k) = \frac{d!^k}{(kd)!}\tau_{kd}(q_1\cdots q_k).
    \end{equation}
\end{lemma}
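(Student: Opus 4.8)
The plan is to reduce to monomials by multilinearity, then compute both sides directly. Both sides of \eqref{eq:multiplicativity_tau} are $K$-multilinear in the tuple $(q_1,\ldots,q_k)$: the left-hand side is a product of images of the $q_i$ under the linear map $\tau_d$, hence linear in each argument separately, while the right-hand side is the multilinear map $(q_1,\ldots,q_k)\mapsto q_1\cdots q_k$ followed by the linear map $\tau_{kd}$. Since the monomials $x_0^{a}x_1^{b}$ with $a+b=d$ span $\Sym^d(W^*)$, it therefore suffices to establish the identity when each $q_l=x_0^{a_l}x_1^{b_l}$ with $a_l+b_l=d$.

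Given this reduction, I would first evaluate the left-hand side. From the definition of $\tau_d$ we get $\tau_d(q_l)=d!\,(-1)^{a_l}w_0^{b_l}w_1^{a_l}$, so, writing $A=\sum_l a_l$ and $B=\sum_l b_l$ (so that $A+B=kd$),
\[\tau_d(q_1)\cdots\tau_d(q_k)=d!^{\,k}\,(-1)^{A}\,w_0^{B}w_1^{A}.\]
For the right-hand side, the product collapses to $q_1\cdots q_k=x_0^{A}x_1^{B}$, and applying $\tau_{kd}$ gives $\tau_{kd}(x_0^{A}x_1^{B})=(kd)!\,(-1)^{A}w_0^{B}w_1^{A}$; multiplying by $d!^{\,k}/(kd)!$ returns precisely $d!^{\,k}\,(-1)^{A}w_0^{B}w_1^{A}$. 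The two expressions agree, which proves the monomial case and hence, by the multilinear reduction, the lemma.

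There is no genuine obstacle here: the argument is a one-line computation once the multilinear reduction is in place. The only point deserving attention is that the stated identity involves dividing by $d!$ and $(kd)!$, which is legitimate exactly because the hypothesis $\mathrm{char}(K)=0$ or $p>kd$ ensures every positive integer up to $kd$ is invertible in $K$. As an alternative route, one could instead invoke the characterizing property $\D(\tau_r(C),C')=(C,C')_r$ together with the non-degeneracy of $\D$ to pin down $\tau_d(q_1)\cdots\tau_d(q_k)$ as an element of $\Sym^{kd}(W)$, but the direct monomial computation above is shorter and fully self-contained.
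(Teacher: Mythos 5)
Your proof is correct. The paper states this lemma without proof, so there is nothing to compare against; your argument — reducing by multilinearity of both sides to the case where each $q_l$ is a monomial $x_0^{a_l}x_1^{b_l}$, and then checking that both sides equal $d!^{\,k}(-1)^{A}w_0^{B}w_1^{A}$ with $A=\sum_l a_l$, $B=\sum_l b_l$ — is the natural direct computation and is complete, including the observation that the hypothesis on $\mathrm{char}(K)$ makes $(kd)!$ invertible.
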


We now detail the cases $d=1,2$.

For odd $k$, the condition on the $\gcd$ of Proposition~\ref{prop:covs} can be satisfied with $d=1$.
\begin{cor}\label{cor:odd_bin}
    Let $f$ be a stable binary form of odd degree $k\geq 5$ with trivial reduced stabilizer, which is generically the case. Then there exist covariants $q_0$ and $q_1$ 
    of order $1$ which are linearly independent at $f$.
    If we let \[\tilde{f} = \sum_{i = 0}^k\binom{k}{i}\Big(q_0^i(f) q_1^{k-i}(f), f\Big)_k X_0^i X_1^{k-i},\]
    then $\tilde{f}$ is $\mathrm{GL}_2$-equivalent to $f$.
    Moreover, the coefficients of $\tilde{f}$ lie in the base field of the invariants.
\end{cor}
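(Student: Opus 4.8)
The plan is to reduce this corollary to the order-$1$ situation already analyzed in Remark~\ref{rem:order1}, and then to translate the operator $\D$ into transvectants through the maps $\tau_r$. First I would settle the existence of $q_0,q_1$. Here $n=1$ and $d=1$, so $\gcd(n+1,d)=\gcd(2,1)=1$ and $(n+1)/\gcd(n+1,d)=2$; the $\gcd$-condition of Proposition~\ref{prop:covs} reads $\gcd(k,2)=1$, which holds precisely because $k$ is odd. A generic binary form of degree $k\geq 5$ is stable with trivial reduced stabilizer (as asserted in the statement), so Proposition~\ref{prop:covs} supplies two covariants $q_0,q_1$ of order $1$ that are linearly independent at $f$, with $r=\dim_K\Sym^1(W^*)-1=1$.

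Next I would pass to contravariants by setting $p_i=\tau_1(q_i)$. Since $\tau_1$ is a linear isomorphism $\Sym^1(W^*)\to\Sym^1(W)$, the $p_0,p_1$ are contravariants of order $1$ that remain linearly independent at $f$. This places us exactly in the setting of Remark~\ref{rem:order1} with $kd=k$ and $d=1$: the form
\[\tilde{g}=\sum_{0\leq i_1,\ldots,i_k\leq 1}\D\big(p_{i_1}(f)\cdots p_{i_k}(f),\,f\big)X_{i_1}\cdots X_{i_k}\]
satisfies $\tilde{g}(p_0(f)^*,p_1(f)^*)=k!\,f$, hence is $\mathrm{GL}_2$-equivalent to $f$, and its coefficients lie in the field where the invariants of $f$ live.

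It then remains to identify $\tilde{g}$ with the stated $\tilde{f}$. Using the multiplicativity identity~\eqref{eq:multiplicativity_tau} with $d=1$, namely $\tau_1(a_1)\cdots\tau_1(a_k)=\tfrac{1}{k!}\tau_k(a_1\cdots a_k)$, together with the relation $\D(\tau_r(C),C')=(C,C')_r$ applied with $r=k$, each coefficient rewrites as
\[\D\big(p_{i_1}(f)\cdots p_{i_k}(f),\,f\big)=\tfrac{1}{k!}\big(q_{i_1}(f)\cdots q_{i_k}(f),\,f\big)_k.\]
Grouping the multi-indices $(i_1,\ldots,i_k)$ by their number $i$ of zero entries (there are $\binom{k}{i}$ of them, each producing the monomial $X_0^iX_1^{k-i}$ and the covariant $q_0(f)^iq_1(f)^{k-i}$) yields $\tilde{g}=\tfrac{1}{k!}\tilde{f}$. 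Finally, since $K$ is algebraically closed and the scalar matrix $\lambda I_2$ acts on a degree-$k$ form by the factor $\lambda^{-k}$, multiplication by the nonzero constant $k!$ is realized by an element of $\mathrm{GL}_2$; hence $\tilde{f}$ is $\mathrm{GL}_2$-equivalent to $\tilde{g}$, and therefore to $f$. Its coefficients are the invariants $\binom{k}{i}\big(q_0^i(f)q_1^{k-i}(f),f\big)_k$, which lie in the base field of the invariants.

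There is no deep conceptual obstacle, as all ingredients are already available; the delicate point is to keep the normalizations exactly compatible, i.e. to track the global factor $\tfrac{1}{k!}$ through the passage from $\D$ to transvectants and to confirm that rescaling by a nonzero constant is a genuine $\mathrm{GL}_2$-equivalence (this is where algebraic closedness of $K$ is used). The combinatorial regrouping of the $2^k$ multi-indices into the $\binom{k}{i}$ classes is the most error-prone, though entirely routine, step.
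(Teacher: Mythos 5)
Your proof is correct and follows essentially the same route as the paper's: it specializes the order-$1$ situation of Theorem~\ref{thm:general} / Remark~\ref{rem:order1} to $n=d=1$, invokes Proposition~\ref{prop:covs} via the gcd condition $\gcd(k,2)=1$ for the existence of $q_0,q_1$, and converts $\D$ into transvectants through $\tau_1$ and the multiplicativity identity~\eqref{eq:multiplicativity_tau}. The only divergence is in the bookkeeping: the paper asserts that the constants $\frac{(kd)!}{d!^k}$ and $\frac{d!^k}{(kd)!}$ cancel exactly so that $\tilde{f}(p_0(f)^*,p_1(f)^*)=f$, whereas you end up with a nonzero scalar multiple of $f$ and absorb it into a $\mathrm{GL}_2$-equivalence over the algebraically closed base field --- your accounting of the factor $k!$ appears to be the more careful one, and in any case the discrepancy is harmless for the stated conclusion.
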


\begin{proof}
    This result is a corollary of Theorem~\ref{thm:general}.
    Indeed, if $p_0(f)^*,\ldots, p_r(f)^*$ is the dual basis of $p_0(f),\ldots, p_r(f)$, then $\tilde{f}(p_0(f)^*,\ldots, p_r(f)^*) = f$.
    We observe that the constant $\frac{(kd)!}{d!^k}$ of Equation~\eqref{eq:decomposition_f} cancels with the constant $\frac{d!^k}{(kd)!}$ of Equation~\eqref{eq:multiplicativity_tau}.
\end{proof}

\begin{rem}
    This statement is established in~\cite[Theorem 3.10]{noordsij}, except for the coefficients of $\tilde{f}$, which are not written with transvectants. 
    Moreover, binary forms of degree $5$ with automorphisms are covered in~\cite{noordsij}, as well as positive characteristic. These cases are not treated here.
\end{rem}

However, for binary forms of even degree $k$, we have $\displaystyle\gcd\left (k, \frac{2}{\gcd(1, 2)}\right ) = 2\ne 1$.
Thus, binary forms of even degree cannot have covariants of order $1$, so we must turn to covariants of order $2$.

\begin{cor}\label{cor:even_bin}
    Let $f$ be a stable binary form of even degree $k\geq 6$ with trivial reduced stabilizer, which is generically the case. Then there exist 3 covariants $q_0,q_1,q_2$ of order 2 
    which are linearly independent at $f$.
    If we let \begin{align*}
        \tilde{f} &= \sum_{\substack{0\leq i, j\leq k\\i+j \leq k}}\binom{k}{i,j}\Big(q_0^i(f) q_1^j(f) q_2^{k-i-j}(f), f\Big)_{2k} X_0^i X_1^j X_2^{k-i-j},\text{ and}\\
        Q &= \sum_{0\leq i, j\leq 2} (q_i(f), q_j(f))_2X_i X_j,
    \end{align*}
    then one can recover $f'\in\Sym^{2k}(W^*)$ from $Q$ and $\tilde{f}$, such that $f'$ is $\mathrm{GL}_2$-equivalent to $f$.
    Moreover, the coefficients of $f'$ lie in at most a quadratic extension of the base field of the invariants, depending on whether the conic defined by $Q$ has a rational point.
\end{cor}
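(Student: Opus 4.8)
The plan is to specialize Theorem~\ref{thm:general}, together with its dual-pair refinement from the ``Improving the algorithm'' subsection, to the case $n=1$, $d=2$, and to translate every $\D$-pairing into a transvectant through the linear isomorphism $\tau_2$. The existence of the three covariants is immediate from Proposition~\ref{prop:covs}: with $n=1$ and $d=2$ the relevant gcd is $\gcd\!\left(k,\tfrac{2}{\gcd(2,2)}\right)=\gcd(k,1)=1$, which holds for every $k$, so any stable $f$ with trivial reduced stabilizer carries $\dim_K(\Sym^2(W^*))=3$ covariants of order $2$ that are linearly independent at $f$; call them $q_0,q_1,q_2$.

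Next I would convert the formula for $\tilde f$ into transvectants. Set $p_i=\tau_2(q_i)$, contravariants of order $2$, still linearly independent at $f$ since $\tau_2$ is a linear isomorphism. For the dual-pair form $\tilde f_{p,q}=\sum_{i_1,\ldots,i_k}\D(p_{i_1}\cdots p_{i_k},f)X_{i_1}\cdots X_{i_k}$, the multiplicativity identity~\eqref{eq:multiplicativity_tau} gives $p_{i_1}\cdots p_{i_k}=\tfrac{2!^k}{(2k)!}\tau_{2k}(q_{i_1}\cdots q_{i_k})$, and the relation $\D(\tau_r(C),C')=(C,C')_r$ turns each coefficient into $\tfrac{2!^k}{(2k)!}(q_{i_1}\cdots q_{i_k},f)_{2k}$. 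Grouping the monomials $X_0^iX_1^jX_2^{k-i-j}$, each occurring $\binom{k}{i,j}$ times, recovers the stated $\tilde f$ up to the global scalar $\tfrac{2!^k}{(2k)!}$. Since the dual-pair lemma gives $\tilde f_{p,q}(p_0(f)^*,\ldots,p_r(f)^*)=\tfrac{(2k)!}{2!^k}f$, the stated $\tilde f$ likewise evaluates to a nonzero multiple of $f$ on the dual basis, and this scalar is harmless up to $\mathrm{GL}_2$.

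The main point is to check that $Q$ cuts out the conic one must parametrize, namely the quadratic relation satisfied by the dual basis $p_i(f)^*=(M^{-1}q(f))_i$, where $M=(\D(p_i(f),q_j(f)))_{ij}=((q_i(f),q_j(f))_2)_{ij}$ is symmetric and invertible (the apolar pairing $(\,\cdot\,,\,\cdot\,)_2$ being nondegenerate on $\Sym^2(W^*)$). Substituting $X_i\mapsto p_i(f)^*$ into $Q=\sum_{ij}(q_i(f),q_j(f))_2X_iX_j$ produces $\sum_{lm}(M^{-1})_{lm}q_l(f)q_m(f)$, the image under multiplication $\Sym^2(W^*)\otimes\Sym^2(W^*)\to\Sym^4(W^*)$ of the $\mathrm{SL}_2$-invariant inverse-metric element of the apolar pairing. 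Because $\Sym^4(W^*)$ carries no nonzero $\mathrm{SL}_2$-invariant, this image vanishes; hence $Q$ vanishes on $\mathrm{Im}(\varphi')$ and, being nondegenerate, defines this smooth conic. Its coefficients $(q_i(f),q_j(f))_2$ are invariants by Lemma~\ref{lem:D_cov}.

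Finally, Proposition~\ref{prop:recover} supplies the reconstruction: choose any parametrization $\varphi''$ of the smooth conic $\{Q=0\}$ with binary-quadratic coordinates and set $f'=\tilde f(\varphi''_0,\varphi''_1,\varphi''_2)$; since all parametrizations differ by an element of $\mathrm{PGL}_2$, $f'$ is $\mathrm{GL}_2$-equivalent to $\tilde f$ evaluated on the dual basis, hence to $f$. The only step that can enlarge the field $K_0$ in which the invariants lie is this parametrization: a smooth conic over $K_0$ admits a rational parametrization over $K_0$ exactly when it has a $K_0$-rational point, and it always acquires one over a suitable quadratic extension (cf.\ Remark~\ref{rem:control_extension}), which yields the claimed bound on the field of definition. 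I expect the genuinely non-formal ingredients to be the identification of $Q$ with the correct conic in the third paragraph (which rests on the absence of $\mathrm{SL}_2$-invariants in $\Sym^4(W^*)$) and the control of the field through rational points on the conic; everything else is the transvectant rewriting of Theorem~\ref{thm:general}.
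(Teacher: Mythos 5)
Your proof is correct, and it follows the route the paper intends (the paper itself gives no proof of this corollary beyond the remark deferring to Mestre and Clebsch, and the two-line proof of the odd-degree analogue Corollary~\ref{cor:odd_bin}). The verification of the gcd condition in Proposition~\ref{prop:covs}, the translation of the $\D$-pairings into transvectants via $\tau_2$ and Equation~\eqref{eq:multiplicativity_tau}, the multinomial grouping, and the field-of-definition analysis via rational points on the conic all match what the paper does or implicitly assumes. The one place where you go beyond the paper is the third paragraph: the paper never proves that the specific form $Q=\sum_{i,j}(q_i(f),q_j(f))_2X_iX_j$ cuts out the image of $[x_0:x_1]\mapsto[p_0(f)^*:p_1(f)^*:p_2(f)^*]$; it only cites Mestre's use of Clebsch's formulas, while Theorem~\ref{thm:general} would instead compute the relation as the kernel of the matrix $\big(\D(\Delta q_i^*\Delta q_j^*, q_lq_m)\big)$. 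Your argument --- that $Q(p_0^*,p_1^*,p_2^*)=\sum_{l,m}(M^{-1})_{lm}q_l(f)q_m(f)$ is the image in $\Sym^4(W^*)$ of the canonical $\mathrm{SL}_2$-invariant inverse element of the apolar pairing, hence vanishes because $\Sym^4(W^*)$ is a nontrivial irreducible $\mathrm{SL}_2$-module, and that $Q$ is nondegenerate because $M$ is invertible --- is a clean, self-contained justification of exactly the point the paper outsources, and it is correct (one can confirm it by computing the inverse pairing in the monomial basis, where it yields $\tfrac12 x_0^2x_1^2-\tfrac12(x_0x_1)^2=0$). One bookkeeping remark: the corollary as stated is internally inconsistent (a form of degree $k$ cannot have a nonzero level-$2k$ transvectant, yet $f'\in\Sym^{2k}(W^*)$ is claimed $\mathrm{GL}_2$-equivalent to $f$); you silently adopt the consistent reading in which $f$ has degree $2k$ and products of $k$ order-$2$ covariants are used, which is clearly what is intended, but it would be worth flagging.
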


\begin{rem}
    This is essentially Mestre's approach~\citep{mestre}. He argues from Clebsch's formulas~\citep{clebsch} that the dual basis $(q_0(f)^*,q_1(f)^*,q_2(f)^*)$ must satisfy the quadratic relation $Q = 0$. 
    Then, by finding a point on the conic $Q = 0$, he parametrizes it, and by reinjecting in
    $\tilde{f}$, he obtains an element $f'\in\Sym^k(W^*)$ which is $\mathrm{GL}_2$-equivalent to $f$.        
\end{rem}

Our method extends the existing reconstruction algorithms to direct sums of binary spaces.

\begin{prop}\label{prop:sum_bin}
    Let $s>1$, $k_1,\ldots, k_s > 0$ be integers, and let $d = 1$ or $2$ such that if $2 \vert\gcd(k_1,\ldots,k_s)$, then $d = 2$.
    Let $K$ be an algebraically closed field of characteristic $0$
    or $p > d\max(k_i)$. Let $W$ be a $2$-dimensional $K$-vector space. Let \[W' = \Sym^{dk_1}(W^*)\oplus \ldots \oplus \Sym^{dk_s}(W^*),\]
    and let $f = (f_1,\ldots,f_s)\in W'$ such that $f$ is stable in $W'$, and with trivial reduced stabilizer.
    There are $2$ cases:

    \begin{enumerate}
        \item If $d = 2$, then there exist $3$ covariants of order $2$ of $W'$ which are linearly independent at $f$, where a covariant here means a $\mathrm{SL}_2$-equivariant map $W'\rightarrow \Sym^r(W^*)$ for some nonnegative integer $r$.
        Let $q_0$, $q_1$, and $q_2$ be such covariants. For all $1\leq i\leq s$, we let
        \[\tilde{f_i} = \sum_{\substack{0\leq l,m\leq k_i\\l+m \leq k_i}}\binom{k_i}{l,m}\Big(q_0^l(f) q_1^m(f) q_2^{k_i-l-m}(f), f\Big)_{2k_i} X_0^l X_1^m X_2^{k_i-l-m}\,,\]
        and \[Q = \sum_{0\leq i,j\leq 2} (q_l(f), q_m(f))_2X_l X_m.\]
        Then the coefficients of $Q$ and the $\tilde{f_i}$'s are invariants of $W'$, and we can recover $f' = (f_1',\ldots,f_s')\in W'$ which is $\mathrm{GL}_2$-equivalent to $f$
        only from the data of $Q$ and all the $\tilde{f_i}$.
        
        \item If $d = 1$, then there exist $2$ covariants of order $1$ of $W'$ which are linearly independent at $f$, which we take to be $q_0$ and $q_1$.
        For all $1\leq i\leq s$, we let
        \[\tilde{f_i} = \sum_{0\leq l\leq k_i}\binom{k_i}{l}\Big(q_0^l(f) q_1^{k_i-l}(f), f\Big)_{k_i} X_0^l X_1^{k_i-l}\,.\]
        Then the coefficients of the $\tilde{f_i}$'s are invariants of $W'$, and $f' = (\tilde{f_1},\ldots,\tilde{f_s})\in W'$ is $\mathrm{GL}_2$-equivalent to $f$.
    \end{enumerate}
\end{prop}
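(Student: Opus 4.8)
The plan is to mirror the single-form arguments of Corollaries~\ref{cor:odd_bin} and~\ref{cor:even_bin}, replacing the single symmetric power by the direct sum $W'$ and applying the decomposition of Corollary~\ref{cor:decompo_f} componentwise with a common basis of $\Sym^d(W^*)$. Throughout, a covariant of order $r$ of $W'$ means an $\mathrm{SL}_2$-equivariant polynomial map $W'\to\Sym^r(W^*)$; note that each projection $f\mapsto f_i$ is itself such a map, hence a covariant of order $dk_i$. The first task is to produce the covariants $q_0,\ldots,q_d$ of order $d$ that are linearly independent at $f$. For this I would invoke Proposition~\ref{prop:domokos} with $G=\mathrm{SL}_2$, $X=W'$, and the module $\Sym^d(W^*)$, which has dimension $d+1$. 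Stability of $f$ guarantees that its orbit is closed, so the only hypothesis left to check is that the stabilizer $G_f$ acts trivially on $\Sym^d(W^*)$.

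The key computation is the following. Since the reduced stabilizer of $f$ is trivial, $G_f$ is contained in the scalar matrices $\{\pm I_2\}\subset\mathrm{SL}_2$, and $-I_2$ fixes $f_i\in\Sym^{dk_i}(W^*)$ exactly when $dk_i$ is even. When $d=2$ every $dk_i=2k_i$ is even, so $G_f=\{\pm I_2\}$ and $-I_2$ acts on $\Sym^2(W^*)$ by $(-1)^2=1$; when $d=1$ the hypothesis $2\nmid\gcd(k_1,\ldots,k_s)$ forces some $k_i$ odd, so $-I_2\notin G_f$ and $G_f$ is trivial. In either case $G_f$ acts trivially on $\Sym^d(W^*)$, and Proposition~\ref{prop:domokos} yields the desired $d+1$ covariants linearly independent at $f$. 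This step, and the interplay between $d$ and the parities of the $k_i$, is exactly the binary specialization of the gcd condition of Proposition~\ref{prop:covs}.

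With $q_0,\ldots,q_d$ in hand, $q_0(f),\ldots,q_d(f)$ is a basis of $\Sym^d(W^*)$, and I would apply Corollary~\ref{cor:decompo_f} to each $f_i$ separately using this shared basis. Translating the $\D$-pairings into transvectants via the map $\tau$ and the multiplicativity identity~\eqref{eq:multiplicativity_tau}—whose constant $d!^{k_i}/(k_i d)!$ cancels the factor $(k_i d)!/d!^{k_i}$ of the decomposition—produces exactly the forms $\tilde{f_i}$ of the statement, normalised so that $\tilde{f_i}$ evaluated at the dual basis returns $f_i$ with no spurious constant. By Lemma~\ref{lem:D_cov} the coefficients of each $\tilde{f_i}$, being transvectants of products of the $q_j$ with the covariant $f\mapsto f_i$, are invariants of $W'$; in the case $d=2$ the same reasoning shows the coefficients $(q_i(f),q_j(f))_2$ of $Q$ are invariants, and $Q=0$ cuts out the conic $\mathrm{Im}(\varphi)$ for $\varphi\colon[x_0:x_1]\mapsto[q_0(f):\cdots:q_d(f)]$.

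Finally I would recover $f'$ as in Proposition~\ref{prop:recover}. For $d=1$, $\varphi$ is an automorphism of $\mathbb{P}^1$, so substituting the dual basis is a single linear change of coordinates and, because every normalising constant is now $1$, the tuple $f'=(\tilde{f_1},\ldots,\tilde{f_s})$ is related to $f$ by one element of $\mathrm{GL}_2$. For $d=2$, I would parametrise the conic $Q=0$, which has a rational point since $K$ is algebraically closed, obtain a parametrisation $\varphi'$ differing from $\varphi$ by an element of $\mathrm{PGL}_2$, and substitute its dual basis into each $\tilde{f_i}$; the uniform normalisation again guarantees that all components are moved by the \emph{same} group element, the residual projective scaling being absorbed by a single scalar matrix $\mu I_2$. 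The crux of the argument is the stabilizer computation of the second paragraph: once the covariants of $W'$ exist, the remainder is the componentwise decomposition together with the observation that a shared basis and the transvectant normalisation force a single simultaneous $\mathrm{GL}_2$-equivalence rather than $s$ independent ones.
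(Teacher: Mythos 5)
Your proposal is correct and follows exactly the route the paper intends: the paper states Proposition~\ref{prop:sum_bin} without a written proof, and your argument is precisely the expected combination of Proposition~\ref{prop:domokos} (with the stabilizer computation specialised to $\{\pm I_2\}\subset\mathrm{SL}_2$ as in Proposition~\ref{prop:covs}) together with the componentwise application of Corollary~\ref{cor:decompo_f} and the transvectant translation of Corollaries~\ref{cor:odd_bin} and~\ref{cor:even_bin}. Your observation that the residual projective scaling is absorbed by a single scalar matrix, so that all components are moved by one common element of $\mathrm{GL}_2$, is the one point the paper leaves entirely implicit, and you handle it correctly.
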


With Proposition~\ref{prop:sum_bin}, and Corollaries~\ref{cor:odd_bin} and~\ref{cor:even_bin}, we derive a reconstruction algorithm for direct sums of binary spaces.
The author is not aware of the existence of such an algorithm in the litterature. Until now, the reconstruction algorithms of direct sums of binary spaces
first reconstructed a form of highest degree $\max(dk_i)$, and were able to reconstruct the other forms using Gröbner bases and the mixed conditions on the invariants (see e.g.~\cite{LRS}). 

\begin{ex}\label{ex:g4}
    Let $W$ be a $2$-dimensional $K$-vector space, and define \[W' := \Sym^6(W^*)\oplus \Sym^4(W^*).\]
    
    We pick $3$ covariants of $W$ of order $2$ which are generically linearly independent:
    \begin{align*}
        q_0(f) &= (f_6, f_4)_4\,,\\
        q_1(f) &= (f_6, f_4^2)_6\,,\text{and }\\
        q_2(f) &= (f_6^2, f_4^3)_{11}\,,
    \end{align*}
    where $f = (f_6, f_4)\in W'$.
    We define 
    \begin{align*}
        \tilde{f_6} &= \sum_{\substack{0\leq i, j, k \leq 2}}(q_i(f) q_j(f) q_k(f), f_6)_6 X_iX_jX_k\\
        \tilde{f_4} &= \sum_{\substack{0\leq i, j\leq 2\\i+j \leq 2}}(q_i(f)q_j(f), f_4)_4 X_iX_j\\
        Q &= \sum_{0\leq i, j\leq k} (q_i(f), q_j(f))_2X_i X_j,
    \end{align*}

    and we let $\varphi$ denote the Veronese embedding
    \[[x:y]\longmapsto [q_0(f):q_1(f):q_2(f)].\] 

    Its image is defined by $Q$, and we know that
    \begin{align*}
        f_6 &= \tilde{f_6}(\tau_2(q_0(f))^*,\tau_2(q_1(f))^*,\tau_2(q_2(f))^*)\\
        f_4 &= \tilde{f_4}(\tau_2(q_0(f))^*,\tau_2(q_1(f))^*,\tau_2(q_2(f))^*).
    \end{align*}
    
    Finding a point on the conic defined by $Q$ allows us to parametrize it. The evaluation of $\tilde{f_6}$ and $\tilde{f_4}$ on this parametrization
    gives $f'=(f_6',f_4')\in W'$, which is $\mathrm{GL}_2$-equivalent to $f$.
\end{ex}

\begin{rem}
    Olive proved that a minimal set of generating covariants of order $2$
    of $W'$ (as a $K[W']^{\mathrm{SL}_2}$-module) is generated by $68$ elements~\cite[Theorem 8.3]{olive-gordan}.
    Hence, if $f$ belongs to the stable locus of $W'$ and has trivial reduced stabilizer, there exist $3$ covariants of order $2$ of $W'$
    which are linearly independent at $f$ by Proposition~\ref{prop:domokos}. These covariants can be taken in the generating set of $68$ covariants, by Remark~\ref{rem:generating_cov}.
\end{rem}

\subsection{Reconstruction of non-hyperelliptic curves of genus $3$}
\label{sec:reconstruction_genus3}
\mbox{}

The canonical embedding of a non-hyperelliptic curve of genus 3 is given by a smooth, irreducible plane quartic, i.e. defined by a ternary form of degree $4$.
The isomorphism classes of these curves are completely determined by the 13 Dixmier-Ohno invariants~\cite{dixmier, ohno}.
In~\cite{LRS}, the authors give an algorithm to reconstruct a generic plane quartic from the data of the Dixmier-Ohno invariants.
They use an exceptional isomorphism between $\mathrm{SO}_3$ and $\mathrm{SL}_2/\{\pm{1}\}$ to reduce to the known case of binary forms.

Their algorithm involves a construction over a quadratic extension of the field of definition of the invariants.
In addition, the authors make the generic assumption that $I_{12}\ne 0$.
\medskip

We present an algorithm that theoretically solves the problem of reconstruction of plane quartics from the Dixmier-Ohno invariants
in more generality. Indeed, the set of smooth plane quartics with non-trivial automorphism group is of codimension 2, compared to codimension 1 for the hypersurface defined by $I_{12}$.
    
\begin{theorem}
    Let $W$ be a $3$-dimensional $K$-vector space, and let $f\in\Sym^4(W^*)$ such that $f$ is stable and has trivial reduced stabilizer.
    Then there exist 3 contravariants of order $1$ of $\Sym^4(W^*)$, which are linearly independent at $f$. 
    Let $p_0$, $p_1$, and $p_2$ be such contravariants. Then
    \[\tilde{f} = \sum_{0\leq i_1,\ldots, i_4\leq 2}\D(p_{i_1}\cdots p_{i_4}, f)X_{i_1}\cdots X_{i_4}\]
is $\mathrm{GL}_{3}$-equivalent to $f$.
\end{theorem}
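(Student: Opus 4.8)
The plan is to treat this statement as the concrete $n=2$, $d=1$, $k=4$ instance of the general reconstruction machinery, so the proof splits into an \emph{existence} step for the three contravariants and an \emph{identity} step for $\tilde f$. No parametrization of a Veronese image is required, since with order $1$ the associated map $\varphi$ is already an automorphism of $\mathbb{P}^2$ (cf. Remark~\ref{rem:order1}).

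First I would establish the existence of the three order-$1$ contravariants that are linearly independent at $f$. The key arithmetic observation is that the form has degree $4$ while $n+1=3$, so that $U_4 = \{\mu I_3 \mid \mu^4 = \mu^3 = 1\} = \{I_3\}$ because $\gcd(4,3)=1$. The hypothesis that $f$ has trivial reduced stabilizer then upgrades to $G_f = U_4 = \{I_3\}$, i.e. the full stabilizer is trivial. Since $f$ is stable its $\mathrm{SL}_3$-orbit is closed, and the trivial group certainly acts trivially on the module $\Sym^1(W) = W$. I would then invoke Proposition~\ref{prop:domokos} with $G=\mathrm{SL}_3$, $X = \Sym^4(W^*)$ and module $W$; this is the contravariant analogue of Proposition~\ref{prop:covs}, obtained by replacing $\Sym^d(W^*)$ with $\Sym^d(W)$ throughout, and it produces $s = \dim_K(W) = 3$ contravariants $p_0,p_1,p_2$ of order $1$ whose values $p_0(f),p_1(f),p_2(f)$ are linearly independent, hence a basis of $W$.

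For the identity step I would apply Corollary~\ref{cor:decompo_f} exactly as in Remark~\ref{rem:order1}. Let $p_0(f)^*,p_1(f)^*,p_2(f)^* \in W^*$ be the $\D$-dual basis of $p_0(f),p_1(f),p_2(f)$; taking $q_i := p_i(f)^*$ as a basis of $\Sym^1(W^*)$ with dual basis $q_i^* = p_i(f)$, the Taylor-like identity with $k=4$, $d=1$ yields
\[
24\,f \;=\; \frac{4!}{1!^4}\,f \;=\; \sum_{0\le i_1,\ldots,i_4\le 2} \D\bigl(p_{i_1}(f)\cdots p_{i_4}(f),\,f\bigr)\,p_{i_1}(f)^*\cdots p_{i_4}(f)^*,
\]
and the right-hand side is precisely $\tilde f$ evaluated at $(p_0(f)^*,p_1(f)^*,p_2(f)^*)$. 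Since $(p_0(f)^*,p_1(f)^*,p_2(f)^*)$ is a basis of $W^*$, there is a matrix $M\in\mathrm{GL}_3$ expressing it in the coordinates $x_0,x_1,x_2$, so the substitution $X_i \mapsto p_i(f)^*$ in $\tilde f$ is exactly the linear change of variables given by $M$, and the display reads $\tilde f \circ M = 24\,f$. The scalar $24$ is harmless, as it is realized by a homothety $\mu I_3 \in \mathrm{GL}_3$ (the contragredient action of $\mu I_3$ scales a quartic by $\mu^{-4}$), so $\tilde f$ and $f$ lie in the same $\mathrm{GL}_3$-orbit.

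I expect the only delicate point to be the existence step: one must notice that trivial reduced stabilizer together with $\gcd(4,3)=1$ forces the \emph{full} stabilizer to be trivial, and that Proposition~\ref{prop:domokos} applies verbatim to the contravariant module $\Sym^1(W)$ rather than the covariant module $\Sym^1(W^*)$. Once that is in place the remainder is pure bookkeeping with Corollary~\ref{cor:decompo_f}, and the passage from the evaluated identity to $\mathrm{GL}_3$-equivalence is immediate because the order-$1$ substitution is linear.
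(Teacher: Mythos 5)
Your proposal is correct and follows the same route the paper takes: the paper proves this theorem only implicitly, by combining the Domokos-type existence result (Proposition~\ref{prop:covs}, adapted to the contravariant module $W=\Sym^1(W)$) with Remark~\ref{rem:order1}, which is exactly your two-step structure. Your explicit verification that $\gcd(4,3)=1$ forces $U_4=\{I_3\}$ (so trivial reduced stabilizer means trivial stabilizer) and your evaluation of Corollary~\ref{cor:decompo_f} at the dual basis $q_i=p_i(f)^*$ are precisely the details the paper leaves to the reader.
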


The transvectant of ternary forms is defined as the determinant of the $\Omega$-process~\citep{olver} for ternary forms. It takes $3$ arguments,
and is denoted $(\cdot,\cdot,\cdot)_l$, where $l$ is a nonnegative integer. Let $'$ be the operator defined in~\citep[End of page 6]{kohel}.
This operator allows to change covariants into contravariants, and vice-versa.
We now construct contravariants $p_0$, $p_1$, and $p_2$ by considering the covariants and contravariants in Table~\ref{tab:def_covcont_genus3}.

By Remark~\ref{rem:order1}, 
\[\tilde{f} = \sum_{0\leq i_1,\ldots, i_4\leq 2}\D(p_{i_1}(f)\cdots p_{i_4}(f), f)X_{i_1}\cdots X_{i_4}\]
is $\mathrm{GL}_{3}$-equivalent to $f$.
\medskip

\begin{rem}
    By Remark~\ref{rem:generating_cov}, finding a generating set of order $1$ contravariants of
    $\Sym^{4}(W^*)$ is enough to reconstruct all smooth non-hyperelliptic curves of genus 3 with no automorphisms.
    Presently, the author does not know such a generating set.
    However, we give $3$ contravariants of order $1$ which are generically linearly independent,
    and allow to reconstruct generically, except on a hypersurface given by the vanishing of the determinant of the three contravariants.
\end{rem}

For the precomputation phase, we need the decomposition of the invariants $\D(p_{i_1}\cdots p_{i_4}, \Id)$
for all $0\leq i_1\leq\ldots\leq i_4\leq 2$, for a total of $15$ invariants.
The degrees of the invariants vary from $57$ to $69$, and their decomposition (calculated using a method of evaluation-interpolation)
took at most $1$ day of computation. These decompositions are stored in~\cite[\protect{\texttt{Decomposition\_genus3.m}}]{git-reconstruction}.
\medskip

After this precomputation step, the actual reconstruction algorithm which takes a list of Dixmier-Ohno invariants and returns a ternary quartic takes around 0.2 seconds in practice for reasonably sized entries.

Let us illustrate the computation with an example. For clarity, we first compute the contravariants and then derive $\tilde{f}$ even though, the user does not have access to the contravariants.
In practice, since the coefficients of $\tilde{f}$ are known polynomials in the Dixmier-Ohno invariants, they can be directly evaluated from the invariants of a given example.

\begin{ex}
    Let \begin{align*}
        f = &-745x_0^3x_2 - 6705x_0^2x_1x_2 - 75990x_0^2x_2^2 - 1788x_0x_1^3 - 36207x_0x_1^2x_2 - 571266x_0x_1x_2^2\\
            &- 1827336x_0x_2^3 - 7152x_1^4 - 123819x_1^3x_2 - 1834488x_1^2x_2^2 + 950004x_1x_2^3 - 631522x_2^4
    \end{align*}
    be a ternary quartic form whose Dixmier-Ohno invariant $I_{12}$ is $0$ (this case is not covered by the existing algorithm of~\cite{LRS}). 
    It defines a smooth non-hyperelliptic curve of genus $3$, since $I_{27}\ne 0$.
    This equation was established using a work of Shioda~\cite{shioda}.   
    
    We compute its contravariants $p_0$, $p_1$, and $p_2$ as in Table~\ref{tab:def_covcont_genus3}.
    Up to scaling, we find
    \begin{align*}
        p_0 = &-36028900960739935302662w_0 + 2546868783781471003910w_1\\ &-
        207634621252481717745w_2,\\
        p_1 = &-167266167826007043607549539758w_0 + 11957094310556682023883659540w_1\\&-
        996728625589442333471190105w_2,\\
        p_2 = &-2137425487531362504044770w_0 + 192739452116090004098632w_1\\ &-
        4823065036939209106179w_2.
    \end{align*}

    We compute $\tilde{f}$:
    its expression is too large to be displayed here, but its coefficient in $x_0^4$ is
    {\footnotesize
    \[-151647765305065905238548582432828758523321832584926229590543175552953534711319971363994226800.\]
    }
    
    The other coefficients have similar sizes. The coefficients of this model can be reduced: we use Elsenhans and Stoll's minimization algorithm of ternary forms~\cite{elsenhans},
    and obtain the minimized model
    \begin{align*}
        f' =&\ 1428254x_0^4 + 1615140x_0^3x_1 - 747384x_0^3x_2 + 1802304x_0^2x_1^2 + 222606x_0^2x_1x_2 + 4470x_0^2x_2^2\\
        & + 1489404x_0x_1^3 + 337932x_0x_1^2x_2 + 26820x_0x_1x_2^2 + 745x_0x_2^3 + 19668x_1^4 + 1788x_1^3x_2.
    \end{align*}

    A simple computation shows that the Dixmier-Ohno invariants of $f'$ are the same as those of $f$.
    Hence $f'$ and $f$ are $\mathrm{GL}_3$-equivalent. 
\end{ex}

\subsection{Reconstruction of non-hyperelliptic curves of genus $4$}
\label{sec:reconstruction_genus4}
\mbox{}

Let $K$ be an algebraically closed field of characteristic $0$. 
Let $\mathcal{C}$ be the canonical embedding in $\mathbb{P}^3$ of a (smooth, irreducible) non-hyperelliptic curve of genus 4 defined over $K$.
Then $\mathcal{C}$ is the complete intersection of a quadric and a cubic.
Let $Q, E\in K[X,Y,Z,T]$ be homogeneous irreducible forms of degree $2$ and $3$ respectively, which define $\mathcal{C}$.

Since $Q$ is irreducible, it must be of rank $3$ or $4$ as a quadratic form. The case of rank $3$ reduces to the reconstruction of elements from $\Sym^6(W^*)\oplus \Sym^4(W^*)$ (for more details, we refer the reader to~\cite{bouchet}).
In fact, any smooth non-hyperelliptic genus $4$ curve $C$ lying on a singular quadric
can be defined as the vanishing locus of $F = w^3+wf_4(s,t)+f_6(s,t)$ in the weighted space $\mathbb{P}(1,1,2)$, where $w$ is of weight $2$.

\begin{prop}[Example~\ref{ex:g4}]
    Let $W$ be a $2$-dimensional $K$-vector space. Let $W' := \Sym^6(W^*)\oplus \Sym^4(W^*)$,
    and let $f = (f_6, f_4)\in W'$.
    
    We pick $3$ covariants of $W$ of order $2$, and assume that they are linearly independent at $f$, which is generically the case:

    \begin{align*}
        q_0(f) &= (f_6, f_4)_4\,,\\
        q_1(f) &= (f_6, f_4^2)_6\,,\text{ and}\\
        q_2(f) &= (f_6^2, f_4^3)_{11}\,.
    \end{align*}

    If we define
    \begin{align*}
        Q &= \sum_{0\leq i,\,j\leq 2}(q_i(f),\,q_j(f))_2X_iX_j,\\
        E &= X_3^3+X_3\left(\sum_{0\leq i,\,j\leq 2}(q_i(f)q_j(f),\,f_4)_4X_iX_j\right)+\sum_{0\leq i,\,j,\,k\leq 2}(q_i(f)q_j(f)q_k(f),\,f_6)_6X_iX_jX_k, 
    \end{align*}
    then the vanishing locus of $Q$ and $E$ is a non-hyperelliptic genus $4$ curve of rank $3$ which is isomorphic to $F$.
\end{prop}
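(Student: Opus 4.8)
The plan is to realize the weighted projective plane $\mathbb{P}(1,1,2)$ in which $F$ lives as the rank-$3$ quadric cone cut out by $Q$ inside $\mathbb{P}^3$, with the extra coordinate $X_3$ playing the role of the weight-$2$ variable $w$, and then to check that $E$ restricts, along this realization, to the equation $F$. In essence the argument is the geometric content of Example~\ref{ex:g4} repackaged so that $w$ becomes an ambient coordinate, so I would lean on that example throughout.

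First I would analyze $Q$. Since the $q_i$ are covariants of order $2$, each transvectant $(q_i(f),q_j(f))_2$ has order $0$, so $Q=\sum_{0\le i,j\le 2}(q_i(f),q_j(f))_2 X_iX_j$ is a quadratic form in $X_0,X_1,X_2$ alone; viewed in $\mathbb{P}^3$ it is therefore a cone with vertex $[0:0:0:1]$. Because $q_0,q_1,q_2$ are linearly independent at $f$, the map $\varphi\colon[s:t]\mapsto[q_0(f):q_1(f):q_2(f)]$ is an isomorphism onto a smooth conic (a reparametrized degree-$2$ Veronese) whose ideal is generated by $Q$; hence $Q$ has rank exactly $3$. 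Fixing a parametrization $\psi=(\psi_0:\psi_1:\psi_2)$ of $\{Q=0\}$ (which exists over the algebraically closed $K$), I would define
\[\function{\Psi}{\mathbb{P}(1,1,2)}{\mathbb{P}^3}{[s:t:w]}{[\psi_0(s,t):\psi_1(s,t):\psi_2(s,t):w]}\,.\]
Under $[s:t:w]\mapsto[\lambda s:\lambda t:\lambda^2 w]$ all four coordinates scale by $\lambda^2$ (the $\psi_i$ are homogeneous of degree $2$ and $w$ has weight $2$), so $\Psi$ is a well-defined morphism, and it is an isomorphism of $\mathbb{P}(1,1,2)$ onto the cone $\{Q=0\}$: on the conic it is $\psi$, while the free coordinate $w=X_3$ supplies the ruling through the vertex.

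Next I would pull back $E$ along $\Psi$. Since $\tilde f_4$ and $\tilde f_6$ involve only $X_0,X_1,X_2$, we obtain
\[\Psi^*E = w^3 + w\,\tilde f_4(\psi_0,\psi_1,\psi_2) + \tilde f_6(\psi_0,\psi_1,\psi_2)\,.\]
By the reconstruction identity of Example~\ref{ex:g4} (itself an application of Corollary~\ref{cor:decompo_f} together with the multiplicativity of $\tau$), evaluating the order-$2$ transvectant coefficients on the parametrization $\psi$ returns the original binary forms up to a simultaneous linear change of variables: $(\tilde f_6(\psi),\tilde f_4(\psi))$ is $\mathrm{GL}_2$-equivalent to $(f_6,f_4)$. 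Applying the corresponding element of $\mathrm{GL}_2$ to $(s,t)$, which is an automorphism of $\mathbb{P}(1,1,2)$, turns $\Psi^*E$ into a scalar multiple of $F=w^3+wf_4+f_6$. Hence $\Psi$ carries $\{F=0\}$ isomorphically onto $\{Q=0,\,E=0\}$. The latter is then a smooth complete intersection of type $(2,3)$ in $\mathbb{P}^3$, i.e. a non-hyperelliptic genus $4$ curve; it lies on the rank-$3$ quadric $Q$, and it is isomorphic to the curve defined by $F$.

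The hard part will be the bookkeeping hidden inside the $\mathrm{GL}_2$-equivalence of the pair. The two components are recovered with the component constants $\tfrac{4!}{2!^2}$ and $\tfrac{6!}{2!^3}$ of the main identity, and one must verify that these factors on the $w$-term and the constant term of $\Psi^*E$ are \emph{simultaneously} absorbed by the single automorphism of $\mathbb{P}(1,1,2)$ used above (a rescaling $w\mapsto\mu w$ combined with a scaling of $(s,t)$), so that $\Psi^*E$ is genuinely proportional to $F$ and not merely a cubic of the same shape with an inequivalent branch locus. This is precisely why one must use one and the same parametrization $\psi$ of the conic for both $\tilde f_4$ and $\tilde f_6$, and it is the step where I would track constants most carefully. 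A secondary and easier point is that smoothness and irreducibility of $\{Q=0,\,E=0\}$ are inherited from those of $F$, which is immediate once $\Psi$ is known to be an isomorphism onto the cone; in particular the depressed shape of $E$ (no $X_3^2$ term) matches that of $F$, so no Tschirnhaus adjustment of $w$ is needed.
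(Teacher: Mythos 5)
Your proposal is correct and follows essentially the route the paper intends: the paper offers no explicit proof beyond the citation of Example~\ref{ex:g4} (itself resting on Proposition~\ref{prop:sum_bin} for the simultaneous $\mathrm{GL}_2$-equivalence of the pair $(\tilde f_6(\psi),\tilde f_4(\psi))$ with $(f_6,f_4)$), and your argument is exactly that reasoning made explicit, via the realization of $\mathbb{P}(1,1,2)$ as the rank-$3$ quadric cone $\{Q=0\}$ with $X_3=w$ and the pullback of $E$ along a parametrization of the conic. Your flagged concern about the relative normalizing constants $\tfrac{(2k_i)!}{2!^{k_i}}$ on the two components is a genuine point the paper glosses over, but it is harmless here because the discrepancy is absorbed by the weighted rescaling $(s,t,w)\mapsto(\lambda s,\lambda t,\mu w)$ of $\mathbb{P}(1,1,2)$, which realizes exactly the relative scalings allowed by $\mathrm{GL}_2$-equivalence of the pair.
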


We now treat the generic case, which is the case of rank $4$.
Without loss of generality, we can assume that $Q$ is in normal form $Q = XT-YZ$, which comes from the fact that $Q$ and $XT-YZ$, as quadratic forms, are both of rank $4$, and therefore are $\mathrm{GL}_4$-equivalent. 
Let \[\psi : \mathbb{P}^1\times\mathbb{P}^1\longrightarrow \mathbb{P}^3\] be the
Segre embedding, defined by $\psi([x:y],[u:v]) = [xu:xv:yu:yv]$. The pullback
of the cubic form $E$ via $\psi$ is $E(xu,xv,yu,yv)$, which is a bicubic form in the variables $x,y$ and $u,v$ that we call $f$. 

In a previous article~\citep{bouchet}, the author proved that two bicubic forms define geometrically isomorphic curves
if and only if they are $\mathrm{GL}_2\times \mathrm{GL}_2\rtimes \mathbb{Z}/2\mathbb{Z}$-equivalent,
where the groups $\mathrm{GL}_2$ act on their respective sets of variables,
and $\mathbb{Z}/2\mathbb{Z}$ exchanges them.

\begin{definition}
    Let $r_1, r_2\geq 1$ and $l_1,l_2\geq 0$ be integers. Let $W$ be a $2$-dimension $K$-vector space.
    We define a covariant of $\Sym^{r_1}(W^*)\otimes \Sym^{r_2}(W^*)$ to be a $\mathrm{SL}_2\times\mathrm{SL}_2$-equivariant
    homogeneous polynomial map
    \[C : \Sym^{r_1}(W^*)\otimes \Sym^{r_2}(W^*) \rightarrow \Sym^{l_1}(W^*)\otimes \Sym^{l_2}(W^*)\]
    We call $(l_1,l_2)$ the bi-order of $C$, and $d$ its degree as a homogeneous polynomial map. As before, in the case $l_1=l_2=0$, $C$ is called an invariant.
\end{definition}

There exist covariants of $\Sym^{3}(W^*)\otimes \Sym^{3}(W^*)$ of bi-order $(1,1)$,
which we can define using a transvectant~\cite[Proposition 4]{bouchet}.

We shall denote the transvectant of bi-level $(l,m)$ by $(f, g)_{l,m}$, or even $(f, g)_l$ when $l=m$.
Let $\D_2$ denote the differential operator defined in Section~\ref{sec:generalization} for $s = 2$.
Like in the case of binary forms, there is a link between $\D_2$ and the transvectant.

\begin{prop}
    Let us assume that $\mathrm{char}(K)$ is either $0$ or $p > \max(d,e)$. Let $\tau_{d,e}$ be the linear function defined by \[\function{\tau_{d,e}}{\Sym^d(W^*)\otimes\Sym^e(W^*)}{\Sym^d(W)\otimes\Sym^e(W)}{x^iy^ju^lv^m}{(-1)^{i+l}d!e!\, x^jy^iu^mv^l}\,.\]
    Then for any, $C\in\Sym^{d_1}(W^*)\otimes\Sym^{e_1}(W^*)$ and $C'\in\Sym^{d_2}(W^*)\otimes\Sym^{e_2}(W^*)$, we have:
    \[\D_2(\tau_{d_1,e_1}(C), C') = (C, C')_{d_1,e_1}\,.\]
    Moreover, if $C$ is a covariant of $\Sym^{l_1}(W^*)\otimes\Sym^{l_2}(W^*)$, then $\tau_{d_1,e_1}(C)$ is a contravariant of the same space.
\end{prop}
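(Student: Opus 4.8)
The plan is to reduce the whole statement to the corresponding single-variable identity $\D(\tau_r(C),C')=(C,C')_r$ for binary forms, exploiting that each of the three operators involved factors across the two tensor slots. The first observation I would make is that $\tau_{d,e}$ coincides with the tensor product $\tau_d\otimes\tau_e$: applying $\tau_d$ in the first pair of variables sends $x^iy^j$ to $d!(-1)^i\,x^jy^i$ and $\tau_e$ in the second pair sends $u^lv^m$ to $e!(-1)^l\,u^mv^l$, so that the product of the signs and factorials reproduces exactly the prescription $x^iy^ju^lv^m\mapsto(-1)^{i+l}d!e!\,x^jy^iu^mv^l$ on the monomial basis. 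Since $\D_2$ is, by its definition in Section~\ref{sec:generalization}, the composition of the two apolarity operators acting on the separate copies of $W$, it likewise factors as $\D\otimes\D$ on decomposable tensors.

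Because both sides of the asserted identity are bilinear in the pair $(C,C')$, I would then reduce to decomposable tensors $C=A\otimes B$ and $C'=A'\otimes B'$. For such inputs the left-hand side becomes
\[
\D_2\big(\tau_{d_1}(A)\otimes\tau_{e_1}(B),\,A'\otimes B'\big)=\D\big(\tau_{d_1}(A),A'\big)\otimes\D\big(\tau_{e_1}(B),B'\big),
\]
and invoking the binary-form identity in each slot rewrites this as $(A,A')_{d_1}\otimes(B,B')_{e_1}$. It then remains only to observe that the bi-level transvectant $(C,C')_{d_1,e_1}$ is, by its definition as the level-$d_1$ binary transvectant applied to the first pair of variables together with the level-$e_1$ transvectant applied to the second, precisely $(A,A')_{d_1}\otimes(B,B')_{e_1}$ on decomposable tensors; this matches the expression just obtained and closes the computation.

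For the equivariance assertion I would use that each $\tau_r$ is, up to the scalar $r!$, the $\mathrm{SL}_2$-equivariant isomorphism $\Sym^r(W^*)\xrightarrow{\sim}\Sym^r(W)$ afforded by the self-duality of the standard $\mathrm{SL}_2$-representation, as already recorded in the binary-form setting. Hence $\tau_{d_1,e_1}=\tau_{d_1}\otimes\tau_{e_1}$ is $\mathrm{SL}_2\times\mathrm{SL}_2$-equivariant as a map $\Sym^{d_1}(W^*)\otimes\Sym^{e_1}(W^*)\to\Sym^{d_1}(W)\otimes\Sym^{e_1}(W)$. Post-composing a covariant $C$, whose values lie in $\Sym^{d_1}(W^*)\otimes\Sym^{e_1}(W^*)$, with this equivariant map yields a polynomial map of the same degree into $\Sym^{d_1}(W)\otimes\Sym^{e_1}(W)$ that is still $\mathrm{SL}_2\times\mathrm{SL}_2$-equivariant, which is by definition a contravariant of $\Sym^{l_1}(W^*)\otimes\Sym^{l_2}(W^*)$.

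I expect the only genuinely delicate points to be bookkeeping ones: confirming that the signs $(-1)^{i+l}$ and factorials $d!e!$ in the definition of $\tau_{d,e}$ split correctly as a tensor product, and checking that the bi-level transvectant is indeed the slot-wise product of the two classical binary transvectants rather than some twisted variant. Once these factorizations are in hand, the substantive input — the $\mathrm{SL}_2$-equivariance of $\tau_r$ and the single-variable relation between $\tau_r$ and the transvectant — is supplied entirely by the binary-form case, so no new analytic work is required.
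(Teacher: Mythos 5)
The paper states this proposition without an accompanying proof (as it does for its single-variable analogue), so there is no argument of the author's to compare against; your proposal supplies the missing argument and it is correct. The route you take --- observing that $\tau_{d,e}=\tau_d\otimes\tau_e$, that $\D_2$ is by construction the composition of the two apolarity operators and hence acts slot-wise on decomposable tensors, and that the bi-level transvectant is the slot-wise product of the two binary transvectants, then reducing by bilinearity to decomposable tensors and invoking the binary identity $\D(\tau_r(C),C')=(C,C')_r$ twice --- is the natural one and almost certainly what the author intends by ``Like in the case of binary forms, there is a link between $\D_2$ and the transvectant.'' Your equivariance argument (post-composition of a covariant with the $\mathrm{SL}_2\times\mathrm{SL}_2$-equivariant isomorphism $\Sym^{d_1}(W^*)\otimes\Sym^{e_1}(W^*)\simeq\Sym^{d_1}(W)\otimes\Sym^{e_1}(W)$) is likewise sound, since covariance here is defined via $\mathrm{SL}$-equivariance so no determinant twist intervenes. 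The one genuine point of care, which you correctly flag rather than resolve, is the normalization of the bi-level transvectant: the paper defines $(\cdot,\cdot)_{l,m}$ only by reference to an external source, and the claimed identity holds on the nose only if that definition is the unnormalized product of the two single-slot sums matching the paper's convention for $(f,g)_l$; a one-line check on bi-monomials (as in the binary case, where $(x_0^ix_1^j,C')_r=(-1)^i r!\,\partial^r C'/\partial x_0^j\partial x_1^i$ confirms the constant $r!$ in $\tau_r$) would close this.
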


Hence, in characteristic $0$ or $p> \max(d,e)$, the functions $\tau_{d,e}$ and $\tau_{d,e}^{-1}$ establish the connection between the transvectant operator and the operator $\D_2$, as well as between covariants and contravariants in the context of 
double binary forms.
In the spirit of Mestre, we choose to speak only of covariants.

\begin{theorem}\label{thm:genus4}
    Let $f\in \Sym^{3}(W^*)\otimes \Sym^{3}(W^*)$ be a stable form, with trivial reduced stabilizer (in $\mathrm{GL}_2\times\mathrm{GL}_2\rtimes \mathbb{Z}/2\mathbb{Z}$). 
    There exist 4 covariants of $\Sym^{3}(W^*)\otimes \Sym^{3}(W^*)$ of bi-order $(1,1)$
    which are linearly independent at $f$. Let us denote them by $q_0$, $q_1$, $q_2$ and $q_3$.

    Now let us define \begin{align}\label{eq:reconstruction_g4}
        Q(f) &= \sum_{0\leq i, j\leq 3}(q_i(f), q_j(f))_1X_iX_j,\text{ and}\\
        E(f) &= \sum_{0\leq i, j, l \leq 3} (q_i(f)q_j(f)q_l(f), f)_3X_iX_jX_l.\\
    \end{align}
    Then the genus $4$ curve defined by $Q(f)$ and $E(f)$ is isomorphic to the genus $4$ curve defined by the bicubic form $f$.
    Moreover, the coefficients of $Q$ and $E$ are invariants for the action of $\mathrm{SL}_2\times \mathrm{SL}_2\rtimes \mathbb{Z}/2\mathbb{Z}$.
\end{theorem}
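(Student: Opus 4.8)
The plan is to reduce Theorem~\ref{thm:genus4} to the tensor-space reconstruction machinery of Section~\ref{sec:generalization}, exactly as the genus~$3$ and binary-form examples reduce to Theorem~\ref{thm:general}. First I would observe that a bicubic form $f\in\Sym^3(W^*)\otimes\Sym^3(W^*)$ is precisely a section of the space appearing in Proposition~\ref{prop:decompo_f_tensor} with $s=2$, $k=3$, and $d_1=d_2=1$. With $d_1=d_2=1$ we are in the favorable order-$1$ situation of Remark~\ref{rem:order1}: the relevant intermediate space is $\Sym^1(W^*)\otimes\Sym^1(W^*)$, which has dimension $4$, so $r=3$ and the Segre--Veronese morphism $\varphi$ is merely an automorphism of $\mathbb{P}^1\times\mathbb{P}^1$ composed with the Segre embedding into $\mathbb{P}^3$. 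The existence of four covariants of bi-order $(1,1)$ that are linearly independent at $f$ is exactly the hypothesis supplied by the stability and trivial-reduced-stabilizer assumption, justified by the tensor analogue of Proposition~\ref{prop:covs} together with Proposition~\ref{prop:domokos} applied to the module $\Sym^1(W^*)\otimes\Sym^1(W^*)$.

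Next I would translate the pairings. Using the proposition relating $\tau_{d,e}$ to $\D_2$, the contravariants $p_i=\tau_{1,1}(q_i)$ are linearly independent at $f$ whenever the $q_i$ are, and the transvectant $(q_i(f)q_j(f)q_l(f),f)_3$ equals the $\D_2$-pairing $\D_2(p_i(f)p_j(f)p_l(f),f)$ up to the universal constant $\tfrac{(kd)!}{d!^k}$. The multiplicativity of $\tau$ (the two-variable analogue of Equation~\eqref{eq:multiplicativity_tau}) guarantees that this constant cancels, so by Proposition~\ref{prop:decompo_f_tensor} the form $E(f)$, when evaluated on the dual basis $p_0(f)^*,\dots,p_3(f)^*$, reproduces $f$ up to scalar. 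Likewise $Q(f)=\sum(q_i(f),q_j(f))_1X_iX_j$ encodes the quadratic relation cutting out the Segre image, because the transvectant $(q_i(f),q_j(f))_1$ is the $\D_2$-pairing recording the relation among the $q_i(f)$, viewed now as a quadric on $\mathbb{P}^3$. Thus $Q(f)$ is (up to $\mathrm{GL}_4$-equivalence) the normal-form quadric $XT-YZ$ and $E(f)$ is a cubic whose pullback by the Segre embedding recovers $f$, which by the author's earlier classification means $Q(f)$ and $E(f)$ define a genus~$4$ curve isomorphic to the one given by $f$.

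For the invariance claim I would argue in two steps. Invariance under $\mathrm{SL}_2\times\mathrm{SL}_2$ is immediate from Lemma~\ref{lem:D_cov} (in its tensor form): each transvectant of covariants is again a covariant, and a transvectant to bi-order $(0,0)$ is an invariant, so every coefficient of $Q$ and $E$ is an $\mathrm{SL}_2\times\mathrm{SL}_2$-invariant. The extra symmetry under the $\mathbb{Z}/2\mathbb{Z}$ that swaps the two $\mathbb{P}^1$ factors is where I expect the real subtlety to lie, and it is the main obstacle: one must check that the chosen covariants $q_i$ can be taken $\mathbb{Z}/2$-equivariantly (or at least that the tuple $(q_i)_i$ is permuted among a space spanning the same four-dimensional image), so that the induced expressions $Q(f)$ and $E(f)$ are genuinely invariant under the swap rather than merely under the connected part. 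This should follow from choosing the generating covariants symmetrically, since the swap exchanges $\Sym^3(W^*)\otimes\Sym^3(W^*)$ with itself and acts on bi-order-$(1,1)$ covariants; the trivial-reduced-stabilizer hypothesis (now taken inside the full group $\mathrm{GL}_2\times\mathrm{GL}_2\rtimes\mathbb{Z}/2\mathbb{Z}$) is what forces the four linearly independent covariants to exist within the $\mathbb{Z}/2$-stable generating set.

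Finally I would assemble the pieces: the geometric identification of the reconstructed complete intersection with the curve of $f$ follows from Proposition~\ref{prop:recover} adapted to the Segre embedding (the parametrization step of Section~\ref{sec:param} extends to the Segre--Veronese case, as noted after Proposition~\ref{prop:decompo_f_tensor}), and the equivalence of curves is upgraded to the correct equivalence by invoking the author's characterization that geometric isomorphism of bicubic forms corresponds to $\mathrm{GL}_2\times\mathrm{GL}_2\rtimes\mathbb{Z}/2\mathbb{Z}$-equivalence. The only genuinely new verifications beyond citing earlier results are the $\mathbb{Z}/2$-equivariance of the covariant choice and the bookkeeping of the cancelling constants, the former being the crux.
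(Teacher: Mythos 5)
Your proposal is correct and follows essentially the same route as the paper's proof: Domokos's result (Proposition~\ref{prop:domokos}) for the existence of the four bi-order-$(1,1)$ covariants, the tensor version of the main identity (Proposition~\ref{prop:decompo_f_tensor}) translated into transvectants via $\tau_{1,1}$ with the cancelling constants, identification of $Q$ with the quadric cutting out the Segre image, and the reduction to the author's classification of $\mathrm{GL}_2\times\mathrm{GL}_2\rtimes\mathbb{Z}/2\mathbb{Z}$-equivalence of bicubic forms. The only differences are ones of emphasis: the paper pins down $Q$ by the dimension count $\dim\big(\Sym^2(\Sym^{1}(W^*)\otimes\Sym^{1}(W^*))\big)-\dim\big(\Sym^{2}(W^*)\otimes\Sym^{2}(W^*)\big)=10-9=1$ (so there is a unique quadratic relation) together with a direct check that $Q$ vanishes on the dual basis, where you argue more loosely, while conversely you rightly flag the $\mathbb{Z}/2$-equivariance of the chosen $q_i$ as a point needing verification, which the paper dismisses as easy.
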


\begin{proof}
    The first part of the statement is an application of~\cite[Prop 3.1]{domokos}:
    $\mathrm{SL}_2\times\mathrm{SL}_2$ is a linearly reductive group, $\Sym^{3}(W^*)\otimes \Sym^{3}(W^*)$ and $\Sym^{1}(W^*)\otimes \Sym^{1}(W^*)$ are
    irreducible $\mathrm{SL}_2\times\mathrm{SL}_2$-modules.
    With a proof similar to Proposition~\ref{prop:covs}, we obtain the existence of linearly independent covariants $q_0$, $q_1$, $q_2$ and $q_3$.
    It is easy to see that these covariants are also $\mathbb{Z}/2\mathbb{Z}$-equivariant.

    The second part is similar to Theorem~\ref{thm:general}, but written with the transvectant instead of the apolar pairing.
    In fact, the statement for $\D_2$ is treated in Section~\ref{sec:generalization}, and we obtain that \[E(\tau_{1,1}(q_0(f))^*,\tau_{1,1}(q_1(f))^*,\tau_{1,1}(q_2(f))^*,\tau_{1,1}(q_3(f))^*) = f.\]

    Morevore, we know that \[\dim(\Sym^{2}(W^*)\otimes \Sym^{2}(W^*)) = 9\,,\] and \[\dim(\Sym^2(\Sym^{1}(W^*)\otimes \Sym^{1}(W^*))) = 10\,.\]
    Hence there is exactly one quadratic relation, up to scaling, between $\tau_{1,1}(q_0(f))^*$, $\tau_{1,1}(q_1(f))^*$, $\tau_{1,1}(q_2(f))^*$ and $\tau_{1,1}(q_3(f))^*$.
    It is easy to check that \[Q(\tau_{1,1}(q_0(f))^*,\ldots,\tau_{1,1}(q_3(f))^*) = 0,\] thus the quadratic relation
    must be given by the quadratic form $Q$.
    
    We conclude that the morphism $\mathbb{P}^1\times\mathbb{P}^1\longrightarrow\mathbb{P}^3$, which sends $[x:y],[u:v]$ to \[[\tau_{1,1}(q_0(f))^*:\tau_{1,1}(q_1(f))^*:\tau_{1,1}(q_2(f))^*:\tau_{1,1}(q_3(f))^*],\]
    is an isomorphism from $\mathbb{P}^1\times\mathbb{P}^1$ to the vanishing locus of $Q$. 
    
    It is possible to find such an isomorphism by putting $Q$ in normal form $XT-YZ$ for example.
    Then, we pullback $E$ via the Segre isomorphism, and the bicubic form obtained is $\mathrm{GL}_2\times\mathrm{GL}_2\rtimes \mathbb{Z}/2\mathbb{Z}$-equivalent to $f$.
    As a consequence, the genus $4$ curve defined by $Q(f)$ and $E(f)$ is isomorphic to the genus $4$ curve defined by the bicubic form $f$.
    
    Finally, the coefficients of $Q(f)$ and $E(f)$ are specializations of invariants of $\Sym^{3}(W^*)\otimes \Sym^{3}(W^*)$ for the action of $\mathrm{SL}_2\times \mathrm{SL}_2\rtimes \mathbb{Z}/2\mathbb{Z}$,
    which concludes the proof.
\end{proof}

\medskip
By Remark~\ref{rem:generating_cov}, finding a generating set of bi-order $(1,1)$ covariants of
$\Sym^{3}(W^*)\otimes \Sym^{3}(W^*)$ is enough to reconstruct all smooth non-hyperelliptic curves of genus 4 and rank 4 with no automorphisms.
Presently, the author does not know such a generating set.

However, we give in Table~\ref{fig:tab_cov} a set of $4$ covariants which allow to reconstruct generically.
Other potential covariants can be found in~\cite[Table 1]{bouchet}.

The covariants $c_{31}$, $c_{51,1}$, $c_{51,2}$, and $c_{51,3}$ of Table~\ref{fig:tab_cov} are generically linearly independent.
The degrees of the invariants of $\Sym^{3}(W^*)\otimes \Sym^{3}(W^*)$ involved range between $6$ and $16$.
These invariants have a very nice decomposition on the basis of $65$ invariants, as all but one of these invariants are already inclused in the basis chosen by the author.
Hence the reconstruction algorithm for non-hyperelliptic curves of genus $4$ is extremely fast.
\medskip

Let us illustrate the computation with an example. For clarity, we first compute the covariants and then derive $Q$ and $E$ even though, the user does not have access to the covariants.
In practice, since the coefficients of $Q$ and $E$ are known polynomials in the basis of $65$ invariants for non-hyperelliptic genus $4$ curves of rank $4$, they can be directly evaluated from the invariants of a given example.

\begin{ex}
  Let $\mathcal{C}$ be the projective non-hyperelliptic genus $4$ curve canonically embedded in $\mathbb{P}^3$, defined by the vanishing locus of
    \[Q = XT-YZ,\] and \[E = X^2Y + X^2Z + X^2T + XY^2 + XYZ + XZ^2 + XZT + XT^2 + Y^2Z + YZ^2 +
        YZT + YT^2 + T^3.\]

The quadratic form $Q$ is of rank $4$, thus we pullback through the Segre morphism the cubic form $E$ to a bicubic form $f$ in $x,y$ and $u,v$.
Then, we compute its covariants $c_{31},c_{51,1}, c_{51,2}$ and $c_{51,3}$.

\begin{align*}
    c_{31} &= -44xu - 17xv - 25yu - 17yv,\\
    c_{51,1} &= 9xu - 107xv - 88yu - 24yv,\\
    c_{51,2} &= -620xu - 1937xv - 1129yu + 181yv,\\
    c_{51,3} &= 25889xu - 5563xv - 19056yu + 1328yv.
\end{align*}

We can now compute the equations of $Q$ and $E$ given by Equation~\ref{eq:reconstruction_g4}. We obtain
\begin{align*}
    Q =\  &646X^2 - 6536XY - 130084XZ - 1923144XT - 19264Y^2 - 549500YZ- 6275840YT\\
    & - 4598186Z^2 - 78659100ZT - 143255872T^2\,,\\
        &\\
    E = &-87337008X^3 + 69815520X^2Y - 3596033232X^2Z + 178527014496X^2T - 629045568XY^2\\
    &- 13790445696XYZ - 435571233408XYT - 147774846096XZ^2 + 586163101824XZT\\
    &- 162711651196224XT^2 + 489595536Y^3 + 31071365856Y^2Z+ 625393402416Y^2T\\
    &+ 676666128096YZ^2 + 20257026499008YZT + 246651902537904YT^2 + 4187892749328Z^3\\
    &+ 229585773241440Z^2T + 1868504372517600ZT^2 + 47848070690492688T^3.
\end{align*}

The minimization of the coefficients of non-hyperelliptic curves of genus 4 with integer coefficients is a joint work in progress with Andreas Pieper,
it is a variation on the algorithm of~\cite{elsenhans}.

For this curve, our minimization algorithm returns in half a second the model
\begin{align*}
Q =&\ X^2 - XZ - 2YZ + 2Z^2 - XT - YT - T^2\\
E =&-XY^2 - X^2Z + 3XYZ - 2Y^2Z + 2XZ^2 + Z^3 + X^2T\\ 
&+ 4XZT - 3YZT-2Z^2T + 3XT^2 - 6ZT^2 - 2T^3,
\end{align*}
which has much smaller coefficients.

As expected, the computation of the invariants of the reconstructed curve are equal, up to weighted projective equivalence, to the original ones.
\end{ex}

\begin{rem} 
    There are some instances where the reconstruction algorithm fails, because the automorphism group of the curve is too big. 
    Let \[Q = X^2+Y^2+Z^2+T^2+(X+Y+Z+T)^2,\] and \[E = X^3+Y^3+Z^3+T^3-(X+Y+Z+T)^3.\]
    Then the non-hyperelliptic curve of genus $4$ (of rank $4$) defined by $Q$ and $E$, has automorphism group $S_5$, the biggest possible
    for a curve defined over $\mathbb{C}$. The reconstruction algorithm fails, since most of its invariants vanish.
    The author was not able to find a non-hyperelliptic curve of genus 4 (of rank 4) with automorphisms
    which could be reconstructed using the $4$ covariants above.
\end{rem}

\newpage
\appendix
\section{Covariant tables}

    \renewcommand{\arraystretch}{1.3}
    \begin{table}[h!]
        \centering
        \begin{tabular}{|c|c|}
            \hline
            Covariants & Contravariants \\
            \hline
            & $\sigma = w_2^4[(\bold{F}',\bold{F}')_4](w_0/w_2, w_1/w_2)$ \\
            & $\psi = w_2^6[((\bold{F}', \bold{F}')_2, \bold{F}')_4](w_0/w_2, w_1/w_2)$ \\
            $\bold{H} = (\bold{F}, \bold{F}, \bold{F})_2$ & $\rho = \D(\bold{F}, \psi)$ \\
            $\bold{C_{4,4}} = x_2^4[(\sigma', \sigma')_4](x_0/x_2, x_1/x_2)$ & $c_{5,4} = \D(\bold{F}, \sigma^2)$ \\
            $\bold{C_{5,2}} = \D(\sigma, \bold{H})$ & $c_{10,5} = (\sigma, \psi, c_{5,4})_3$ \\
            $\bold{C_{8,5}} = (\bold{F}, \bold{H}, \bold{C_{4,4}})_3$ & $c_{12,3} = \D(\bold{C_{8,5}}, \sigma^2)$ \\
            $\bold{C_{12,3}} = \D(\rho, \bold{C_{8,5}})$ & $p_0 = \D(\bold{C_{12,3}}, \rho)$ \\
            & $p_1 = \D(\bold{C_{12,3}}, c_{5,4})$ \\
            & $p_2 = \D(\bold{C_{5,2}}, c_{12,3})$ \\
            \hline
        \end{tabular}
        \caption{Covariants (bold) and contravariants used to compute $p_0,p_1$ and $p_2$}
        \label{tab:def_covcont_genus3}
    \end{table}

\renewcommand{\arraystretch}{1.2}
\begin{table}[h!]
    \centering
    \begin{tabular}{|c|c|c|c|c|}
        \hline
        \diagbox[width=5.6em]{degree}{order} & 1 & 2 & 3 & 4\\
        \hline
        1 &  &  & $f$ & \\
        \hline
        2 &  & $h=(f,f)_2$ &  & $j=(f,f)_1$ \\
        \hline
        \multirow{2}{*}{3} & \multirow{2}{*}{$c_{31} = (h,f)_2$} &  & $c_{33,1} = (j,f)_2$ &  \\
        & & & $c_{33,2} = (h,f)_1$ & \\
        \hline
        \multirow{3}{*}{4} &  & $c_{42,1} = (h,h)_1$ &  & \multirow{2}{*}{$c_{44,1} = (c_{33,2},f)_1$} \\
        & & $c_{42,2} = (c_{31}, f)_1$ & & \multirow{2}{*}{$c_{44,2} = ((j,f)_1,f)_2$}\\
        & & $c_{42,3} = (c_{33,2}, f)_2$ & & \\
        \hline
        \multirow{3}{*}{5} & $c_{51, 1} = (c_{42,2}, f)_2$ &  & &  \\
        & $c_{51,2} = (c_{44,1}, f)_3$& &  & \\
        & $c_{51,3} = (c_{44,2}, f)_3$ & &  & \\
        \hline
    \end{tabular}
    \caption{Several covariants of $\Sym^3(W^*)\otimes \Sym^3(W^*)$}
    \label{fig:tab_cov}
\end{table}

\bibliographystyle{elsarticle-harv}
\bibliography{paper}

\end{document}